\documentclass[11pt]{article}

\usepackage[margin=1in]{geometry}
\usepackage{amsmath}
\usepackage{amsfonts}
\usepackage{amssymb}
\usepackage{amsthm}
\usepackage{mathtools}
\mathtoolsset{showonlyrefs}
\usepackage{dirtytalk}
\usepackage{tikz}
\usepackage{doi}
\usepackage{xcolor}
\usepackage{hyperref}
\usepackage[capitalise]{cleveref}

\numberwithin{equation}{section}

\newcommand{\D}[1]{\mathcal{D}(#1)}
\newcommand{\C}[1]{\mathcal{#1}}
\newcommand{\BR}{\mathbb{R}}
\newcommand{\B}[1]{{\bf #1}}

\newcommand{\ad}{{\rm{ad}}}
\newcommand{\mt}{{-i}}

\newcommand{\Lv}{L_{y_0}}

\DeclareMathOperator*{\am}{argmin}

\newcommand*\EX{\hspace*{0em plus 1fill}$\Box$}

\theoremstyle{plain}
\newtheorem{theorem}{Theorem}[section]

\newtheorem{proposition}[theorem]{Proposition}
\newtheorem{corollary}[theorem]{Corollary}
\newtheorem{definition}[theorem]{Definition}

\theoremstyle{definition}
\newtheorem{ass}{Assumption}[section]

\theoremstyle{remark}

\newtheorem{example}[theorem]{Example}

\newcounter{thmpart}[theorem]

  \newcounter{asspart}[ass]
  
  \newcommand{\asspartlabel}[1]{%
    \refstepcounter{asspart}\label{#1}\textbf{(\alph{asspart})}%
    }
    
    \crefname{theorem}{Theorem}{Theorems}
    \Crefname{theorem}{Theorem}{Theorems}
    \crefname{proposition}{Proposition}{Propositions}
    \Crefname{proposition}{Proposition}{Propositions}
    \crefname{lemma}{Lemma}{Lemmas}
    \Crefname{lemma}{Lemma}{Lemmas}
    \crefname{corollary}{Corollary}{Corollaries}
    \Crefname{corollary}{Corollary}{Corollaries}
    \crefname{definition}{Definition}{Definitions}
    \Crefname{definition}{Definition}{Definitions}
    \crefname{remark}{Remark}{Remarks}
    \Crefname{remark}{Remark}{Remarks}
    \crefname{ass}{Assumption}{Assumptions}
    \Crefname{ass}{Assumption}{Assumptions}
    \crefname{asspart}{Assumption}{Assumptions}
    \Crefname{asspart}{Assumption}{Assumptions}
    \crefname{thmpart}{Theorem}{Theorems}
    \Crefname{thmpart}{Theorem}{Theorems}
    \crefname{example}{Example}{Examples}
    \Crefname{example}{Example}{Examples}
    
    \newcommand{\TheTitle}{State constrained convex Nash equilibrium problems coupled with linear hyperbolic PDEs}
    \newcommand{\TheAuthors}{M. Bongarti and M. Hinterm\"uller}
    \newcommand{\email}[1]{\texttt{#1}}
    
    \title{\TheTitle}
    
    \author{Marcelo Bongarti\thanks{Weierstra\ss\ Institute for Applied
    Analysis and Stochastics, Berlin, Germany
    (\email{bongarti@wias-berlin.de}, corresponding author).}
    \and Michael Hinterm\"uller\thanks{Weierstra\ss\ Institute for
    Applied Analysis and Stochastics, Berlin, Germany, and
    Humboldt-Universit\"at zu Berlin, Berlin, Germany.}}
    
    \date{}
    
    \hypersetup{
      pdftitle={\TheTitle},
        pdfauthor={\TheAuthors}
        }
        
\begin{document}
        
        \maketitle
        
        \begin{abstract}
        \noindent We study the existence of equilibria for state constrained, convex generalized Nash equilibrium problems (GNEPs) coupled with hyperbolic partial differential equations (PDEs). Analogous problems have been addressed for state constrained GNEPs coupled with elliptic and parabolic PDEs, respectively, but the problematic regularity of the set-valued constraint maps has been a barrier for development of an existence theory in the hyperbolic case. This is mainly due to compactness issues with the strategy-to-state maps. Beyond existence, we also provide first-order optimality conditions for a class of fairly general linear hyperbolic PDEs and show its relevance for applications such as the wave equation, advertising dynamics, and the linearized isothermal Euler system on networks.
        \end{abstract}
        
        \medskip
        \noindent\textbf{Key words.} generalized Nash equilibrium problems, PDE-constrained optimization, hyperbolic partial differential equations, state constraints, set-valued analysis, graph-convexity, gas networks
        
        \medskip
        \noindent\textbf{AMS subject classifications.} 91A10, 49J20, 49K20, 35L50, 47H04, 90C33
        
        \bigskip
        
        \section{Introduction}

Noncooperative games with PDE constraints naturally arise in applications where multiple agents (or players) influence a shared physical system; see, e.g., \cite{hintermuller_generalized_2015,hintermueller_pde-constrained_2013,roubicek_nash_2007,roubicek_noncooperative_1999,lasiecka_control_2000}. Specific examples involving hyperbolic PDEs include gas transport, vibration control, traffic flow, and advertising models governed by transport equations \cite{bongarti_optimal_2024,machowska_closed-loop_2022,dvurechensky_cournot-nash_2024}. 

In this paper we consider $N$-player games, with $N>1$. 
Each player $i\in\{1,\dots,N\}$ controls a decision variable $u_i$ by solving an optimization problem of the general form
\begin{equation}\label{gen_game}
    \min_{u_i} \mathcal{J}_i(u_i),
\end{equation}
where $\mathcal{J}_i$ represents the player's objective. We keep the notation in \eqref{gen_game} simple, but the modeling flexibility (and hence complexity) of such games stems from the wide range of possible constraints and interaction mechanisms that can be present in \eqref{gen_game}. 
In particular, the decision problem of each player may depend on the strategies of the others, both through the objective functional and through the feasible set.

In view of the latter, we analyze a class of $N$-player games featuring three distinct types of coupling: (i) linear \emph{hyperbolic} PDE coupling; (ii) objective coupling; and (iii) geometric (feasibility) coupling. Objective coupling, where $\mathcal{J}_i$ may depend on the full strategy profile $u=(u_1,\dots,u_N)$, is standard in noncooperative game theory. 
In contrast, PDE coupling and geometric feasibility coupling are significantly less explored and, 
to the best of the authors' knowledge, the \textit{simultaneous} presence of hyperbolic PDE coupling and geometric feasibility constraints has not yet been addressed in the existing literature.

Motivated by the aforementioned games involving PDEs, we consider dynamics given by the following family of abstract Cauchy problems: 
\begin{equation}\label{intro_state}
\begin{cases}
y_t(t) = \mathcal{A}_iy(t) + \mathcal{P}_i(t)y(t) + (\mathcal{B}_iu)(t) + f_i(t),
& t>0,\\[1mm]
y(0)=y_0^i,
\end{cases}
\end{equation}
posed in a Banach space $Y$. 
Here, $\mathcal{A}_i$ generates a $C_0$-semigroup on $Y$ and the linear operators $\mathcal{P}_i(\cdot)$ and $\mathcal{B}_i$ satisfy assumptions ensuring well-posedness of \eqref{intro_state} in an appropriate sense, specified later. The family is coupled because the solution of \eqref{intro_state}, for each $i$, depends on the full strategy vector $u = (u_1, \cdots, u_N)$. 

Given a finite time horizon $T>0$, the dynamics \eqref{intro_state} induces the family of \emph{strategy-to-state} maps $u \mapsto \mathcal{S}_i(u): [0,T] \mapsto Y$, which enter the objective coupling. A fairly typical structure is $\mathcal{J}_i(u) = \mathcal{J}_i^1(\mathcal{S}_i(u)) + \mathcal{J}_i^2(u_i)$ so that each objective depends on the entire strategy bundle through the state. This already introduces a global interaction mechanism.

The most intricate coupling arises at the feasibility level. In standard Nash equilibrium problems, each player has a fixed individual constraint set. Here we allow the feasible set of player $i$ to depend on the competitors' strategies. Denoting by $u_\mt$ the vector collecting the $N-1$ strategies of all players except $i$, we write the feasible set as $\B{U}_i(u_\mt)$. Each player then solves
\begin{equation}\label{intro_gen_game}
    \min\limits_{v_i \in \B{U}_i(u_\mt)} \C{J}_i(v_i,u_\mt)
\end{equation}
which leads us to a \emph{generalized} Nash equilibrium problem (GNEP). 

Under well-posedness of problem \eqref{intro_gen_game}, one can define the best-response set-valued map 
\begin{equation}\label{intro_best}
    u \mapsto \C{V}(u), \qquad \C{V}(u) = \left\{v = (v_1, \cdots, v_N) : v_i \in \am_{u_i \in \B{U}_i(u_\mt)} \C{J}_i(v_i, u_\mt)\right\}.
\end{equation} 
A generalized Nash equilibrium (GNE) is a fixed point of this map, i.e., $\overline{u}$ is a GNE iff $\overline{u} \in \C{V}(\overline{u})$. 

The central analytic difficulty in proving existence of generalized Nash equilibria lies in the regularity properties of the set-valued map $u_\mt \mapsto \B{U}_i(u_\mt)$. Classical existence results -- such as \cite{arrow_existence_1954} -- rely on continuity of $\B{U}_i(\cdot)$. However, for set-valued maps, continuity is understood as the combination of upper and lower semicontinuity properties. While such properties are more likely to hold when $\B{U}_i(u_\mt)$ depends on $u_\mt$ algebraically, the situation is substantially more involved when the feasible sets have the form
\begin{equation}
    \label{intro_feasible} \B{U}_i(u_\mt) = \{u_i : \C{S}_i(u) \in K_i\}, \qquad K_i \ \text{compact,}
\end{equation}
which is the case of interest in this work. 

In \eqref{intro_feasible}, the coupling acts through the solution operator of the hyperbolic PDE \eqref{intro_state}. The feasibility condition $\C{S}_i(u) \in K_i$ requires sufficient regularity of the state $y_i = \C{S}_i(u)$ in order to be even well-defined. Moreover, regularity properties of the set-valued map \eqref{intro_feasible} depend on structural properties of the strategy-to-state map itself. Practically, this establishes a coupling between the players' decisions, and mathematically an interplay between PDE and equilibrium theories.

Since differential operators typically enjoy favorable closedness properties, upper semicontinuity of maps as in \eqref{intro_feasible} can often be obtained under mild assumptions. The cumbersome property is lower-semicontinuity. In fact, in the few works where feasible sets of the form \eqref{intro_feasible} appear, either the game is jointly convex so that the family of optimization problems can be reduced to a single problem via the Nikaido-Isoda function \cite{nikaido_note_1955}, as in \cite{hintermuller_generalized_2015,dvurechensky2024cournotnashmodelcoupledhydrogen}, or extra regularity assumptions are imposed. For instance, the notion of \emph{strict uniformity of feasible responses} in \cite{hintermueller_pde-constrained_2013} allows one to obtain a generalized Nash equilibrium as a uniform limit of Nash equilibria.

This assumption reflects a structural difficulty. Indeed, verifying lower semicontinuity of maps of the form \eqref{intro_feasible} is challenging even for elliptic and parabolic PDEs, which possess stronger regularity properties than hyperbolic equations. The reason is that lower semicontinuity depends not only on analytic features such as compactness and closedness, but also on geometric properties such as interiority of feasible points. These issues are particularly delicate in infinite dimensional and function space settings.

In this paper we provide a new proof of existence of equilibria for convex generalized Nash games, with specific emphasis on PDE-constrained problems, although the approach applies more broadly. The method builds on the framework recently developed in \cite{bongarti2025structureversusregularitysetvalued}. The central idea is to replace regularity requirements on the constraint maps by structural conditions. In analogy with jointly convex case, existence can be obtained when the bundle constraint map satisfies suitable convexity-type properties. In \cite{bongarti2025structureversusregularitysetvalued} we covered two classes: graph-convex and KKM ( Knaster-Kuratowski-Mazurkiewicz) maps. Both structures allow us to avoid lower-semicontinuity assumptions on $\B{U}_i(\cdot).$ 

In finite-dimensional game theory, the advantage of using graph-convexity is mainly procedural, since graph-convexity of a map is known to be equivalent to lower semicontinuity on the interior of its domain. In infinite-dimensional settings, however, graph-convexity becomes substantially more powerful. It permits the treatment of games whose constraint maps have domains with an empty interior, a quite frequent situation in function spaces. Moreover, graph-convexity is easier to verify than lower-semicontinuity, even in simplified PDE-constrained models.

In practical applications, the coupling mechanisms discussed above arise in concrete structural forms. In energy systems, for example, pressure bounds and capacity restrictions are imposed on gas or hydrogen transport networks. These restrictions lead naturally to state constraints in equilibrium models that combine market interaction with physical flow dynamics. While hyperbolic equilibrium problems without state constraints have been studied in the literature, see, e.g., \cite{lions_hierarchic_1994,lions_remarks_1994} and references therein), the case with state constraints is in essence void. 

A fundamental structural distinction between parabolic and hyperbolic dynamics plays a decisive role in this context. Parabolic operators have smoothing effects. As a consequence, the associated strategy-to-state maps are often compact in natural functional spaces. This compactness can be exploited in equilibrium analysis, since it supports sequential closedness arguments that underlie classical fixed-point approaches and Kakutani-type theorems \cite{arrow_existence_1954,bensoussan_impulse_1987,chan_generalized_1982}.

In contrast, evolution families generated by hyperbolic operators do not regularize, in general. Consequently, the strategy-to-state map for a hyperbolic PDE is usually not compact in most natural function spaces. Thus, a general existence theory for convex state-constrained GNEPs coupled by hyperbolic PDEs has remained largely underdeveloped. 

We believe this paper makes a valuable contribution to the topic and illustrates the reach of the framework. The abstract assumptions are verified for three representative hyperbolic models: (i) the wave equation with Dirichlet boundary conditions; (ii) a first-order advertising--goodwill model \cite{machowska_closed-loop_2022}; and (iii) the linearized isothermal Euler system on tree-shaped gas networks \cite{bongarti_optimal_2024}. 
These examples demonstrate that the proposed approach applies to relevant classes of PDE-constrained games arising in mechanics, transport, and energy systems.

\medskip

\noindent{\bf Outline.} \Cref{sec:existence_regularity} sets up the abstract state equation, lists the standing assumptions on the generator, the perturbation, and the controls, and establishes well-posedness and basic regularity of the strategy-to-state maps $\C{S}_i$, in both a classical and a dual classical setting. \Cref{sec:existence_equilibria} contains our main existence result: we show that graph-convexity of the coupling constraint map, together with weak upper semicontinuity, is enough to guarantee a generalized Nash equilibrium, with no lower-semicontinuity assumption on the feasible sets; we also explain how to choose the ambient space $W$ so that the required compactness follows from an Aubin--Lions argument. \Cref{sec:two_examples} illustrates these abstract hypotheses on two model problems, the controlled wave equation and a first-order advertising-goodwill equation.

\Cref{sec:characterization} turns the existence theory into a characterization of equilibria through first-order optimality conditions, obtained via an abstract Lagrange multiplier rule together with a suitable notion of continuity point. Finally, \Cref{ex_gas} applies the full machinery to a noncooperative game on a small gas network governed by the linearized isothermal Euler equations, in which a network operator and two competing suppliers act through boundary controls; we prove existence of an equilibrium and derive its first-order optimality system explicitly, including the adjoint equations satisfied by each player.

\medskip

\noindent{\bf Notation.} All vector spaces are real Banach spaces. For a Banach space $X$, its dual is $X^*$. The norm in $X$ is denoted by $\|\cdot\|_X$ and when there is no risk of confusion we write $\|\cdot\|$. For an interval $(0,T)$, $T>0$, and a Banach space $X$,
  $
    L^p(0,T;X),\; 1 \le p \le \infty,
  $
  denotes the usual Bochner spaces \cite{MR2500068}, and
  $
    C([0,T];X)
  $
  is the space of continuous functions from $[0,T]$ into $X$.
  We use $H^1(0,T;X)$ for the standard Sobolev space of $X$–valued functions
  with square-integrable weak time derivative.

  If $E$ is a Banach space, then
  $
    BV((0,T);E^*)
  $
  denotes the space of functions of bounded variation from $[0,T]$ into $E^*$ \cite{MR1079985}. The space of $E^*$–valued (finite) Radon measures on $(0,T)$ is written as
  $
    \C{M}(0,T;E^*) \coloneqq C([0,T];E)^*.
  $ For Banach spaces $X,Y$, the product space is $X\times Y$ with norm
  $
    \|(x,y)\|_{X\times Y} \coloneqq \|x\|_X + \|y\|_Y.
  $
  Finite Cartesian products are treated analogously (e.g., $U=\prod_{i=1}^N U_i$).

  If $X$ embeds continuously into $Y$ we write
  $
    X \hookrightarrow Y.
  $
  If the embedding is compact, we write
  $
    X \xhookrightarrow{c} Y.$ Convergence in norm (strong convergence) of a sequence $(x_n)$ in a Banach space $X$ is denoted by
  $
    x_n \to x \quad\text{in } X.
  $
  Weak convergence in $X$ is denoted by
  $
    x_n \rightharpoonup x \quad\text{in } X.
  $
  Weak-* convergence in a dual space $X^*$ is denoted by
  $
    x_n^* \rightharpoonup^* x^* \quad\text{in }X^*.
  $ When needed, we indicate the space explicitly, e.g.
  $
    u_n \rightharpoonup u \quad\text{in } L^2(0,T;Y)$ or $
    y_n \to y \quad\text{in } C([0,T];E).
  $

  Finally, we use the notation $F: X \rightrightarrows Y$ when $F$ is a set-valued map, i.e., for each $x$ in the domain of $F$, $F(x)$ is a subset of $Y$. The graph of a set-valued map is given by ${\rm Gr}(F) = \{(x,y):y \in F(x)\}.$

\section{Existence and basic regularity of the states}\label{sec:existence_regularity}

In this section we discuss basic well-definedness and regularity of the map $u \mapsto \C{S}_i(u)$ for each $i = 1, \cdots, N$ in the following setting.
For this purpose, let $Y$ and $U$ be Banach spaces such that 
$U = U_1\times\ldots\times U_N$, for reflexive Banach spaces $U_i$ for all $i = 1,\dots,N$.

We consider the following abstract Cauchy problem as the underlying state equation:
\begin{equation}\label{abs1_dis_in}
  \begin{cases}
    y_t(t) = \C{A}y(t) + \C{P}_i(t)y(t) + (\C{B}_i u)(t) + f(t), & t > 0,\\[1mm]
    y(0) = y_0.
  \end{cases}
\end{equation}
The various (given) constituents in \eqref{abs1_dis_in} satisfy the following structural conditions.

\begin{ass}\label{ass_a}
\ 
\begin{itemize}
  \item[\asspartlabel{ass_a-a}]
    $\C{A} : \D{\C{A}} \subset Y \to Y$ is a closed, densely defined linear operator that generates a strongly continuous (or $C_0$) semigroup $\{S(t)\}_{t \geqslant 0}$.
   \item[\asspartlabel{ass_a-b}]
    For each $i = 1,\dots,N$, the linear operator
    $\C{B}_i : U \to L^2(0,T;Y)$ is weak-to-strong continuous.
  \item[\asspartlabel{ass_a-c}]
    For each $i = 1,\dots,N$ and every $t \in [0,T]$, the linear operator
    $\C{P}_i(t) : Y \to Y$ is bounded, and for every $y \in Y$ we have
    $\C{P}_i(\cdot)y \in C([0,T];Y)$.
    \item[\asspartlabel{ass_a-e}] $f \in L^1(0,T;Y)$, and $y_0\in Y$.
\end{itemize}
\end{ass}

Problem \eqref{abs1_dis_in} together with \Cref{ass_a} is the prototype of the linear hyperbolic state equation studied in this paper.  
Our arguments can be extended to more general linear hyperbolic models with minor (mostly notational) changes, but we keep this structure for clarity.

\subsection{Existence of the strategy-to-state map}\label{state_prop}

Let $\{S(t)\}_{t \geqslant 0}$ be the semigroup generated by the operator $\C{A}$; see \Cref{ass_a-a}.  
Then there exist constants $M \geqslant 1$ and $\omega \geqslant 0$ such that
\begin{equation}
  \label{exp_est}
  \|S(t)y\|_{Y} \leqslant M e^{\omega t} \|y\|_Y,
\end{equation}
for all $t \geqslant 0$ and $y \in Y$; see \cite{pazy_semigroups_1983}.  
Since this estimate holds for any $C_0$–semigroup, we reuse the same constants $M$ and $\omega$ whenever convenient, even if the underlying semigroup changes.

By the uniform boundedness principle and \Cref{ass_a-c}, we obtain
\begin{equation}
  \label{unif_P}
  P_i \coloneqq \max_{t \in [0,T]} \|\C{P}_i(t)\|_{\C{L}(Y)} < +\infty.
\end{equation}

For fixed $y_0$, we formally define the operator $u \mapsto \Lv u$ implicitly by the variation-of-parameters formula
\begin{equation}
  \label{vop_formula}
  [\Lv u](t)
  = S(t)y_0
    + \int_0^t S(t-\tau)\bigl[\C{P}_i(\tau)[\Lv u](\tau) + (\C{B}_i u)(\tau)+f(\tau)\bigr]\,d\tau,
\end{equation}
for $t\in (0,T]$. Grönwall’s inequality implies that \eqref{vop_formula} has at most one solution.  
Hence $\Lv$ is single-valued: if a solution exists for a given $u$, then it is unique.




\begin{definition}
  A function $y: [0,T] \to Y$ is a \textbf{classical} solution of \eqref{abs1_dis_in} if
  \begin{itemize}
    \item[\bf (i)] $y$ is continuously differentiable as a $Y$–valued function;
    \item[\bf (ii)] $y(t) \in \C{D}(\C{A})$ for all $t \in [0,T]$ and $y$ is continuous as a $\C{D}(\C{A})$–valued function;
    \item[\bf (iii)] $y(0) = y_0$ and $y$ satisfies \eqref{abs1_dis_in} for all $t \in (0,T]$.
  \end{itemize}
\end{definition}

In many applications, in particular games under pointwise state constraints, one needs such regular solutions.
The following result is an adaptation of standard results; see, e.g., \cite{bensoussan_impulse_1987}, and needs extra regularity of the data as invoked next.

\begin{ass}\label{classical_setting}
\ 
\begin{enumerate}
    \item[\asspartlabel{classical_setting-a}] $y_0 \in \C{D}(\C{A})$ and $u \in U$;
    \item[\asspartlabel{classical_setting-b}] $\C{P}_i(\cdot)y \in C^1([0,T];Y)$ for each $y \in Y$; 
    \item[\asspartlabel{classical_setting-c}] 
      $\C{B}_i \in \C{L}(U;H^1(0,T;Y))$; and
      \item[\asspartlabel{classical_setting-d}]
      $f \in H^1(0,T;Y).$
\end{enumerate}
\end{ass}

\begin{theorem}
  \label{wpp_classic}
  Under \Cref{classical_setting}, there exists a unique classical solution of \eqref{abs1_dis_in}.  
  Its regularity space is
  \begin{equation}
    \label{X_T}
    X_T \coloneqq C^1([0,T];Y) \cap C([0,T];\C{D}(\C{A})).
  \end{equation}
\end{theorem}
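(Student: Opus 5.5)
We reduce the problem to the classical perturbation theory of $C_0$-semigroups with time-dependent bounded perturbations, treating $g(t) \coloneqq (\C{B}_i u)(t) + f(t)$ as a forcing term. By \Cref{classical_setting-c} and \Cref{classical_setting-d}, $g \in H^1(0,T;Y) \hookrightarrow C([0,T];Y)$, and $g$ is absolutely continuous with derivative $g' \in L^2(0,T;Y) \subset L^1(0,T;Y)$. Uniqueness is already settled: any classical solution is in particular a mild solution of \eqref{vop_formula}, and the Grönwall argument preceding the definition gives at most one such solution. It therefore remains to prove existence of a mild solution and to upgrade it to a classical one.

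\textbf{Step 1 (mild solution).} We obtain $y = \Lv u \in C([0,T];Y)$ as the fixed point of
\begin{equation}
  (\Phi y)(t) = S(t)y_0 + \int_0^t S(t-\tau)\bigl[\C{P}_i(\tau)y(\tau) + g(\tau)\bigr]\,d\tau
\end{equation}
on $C([0,T];Y)$. Strong continuity of $\C{P}_i(\cdot)$ together with the bound \eqref{unif_P} makes $\tau \mapsto \C{P}_i(\tau)y(\tau)$ continuous. Using \eqref{exp_est}, the $n$-th iterate $\Phi^n$ is Lipschitz with constant $(MP_ie^{\omega T}T)^n/n!$. Alternatively, one can work with the weighted norm $\sup_t e^{-\lambda t}\|y(t)\|_Y$ for $\lambda$ large. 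Either way, the Banach fixed point theorem applies.

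\textbf{Step 2 (differentiability).} This is the main step. Since $y_0 \in \C{D}(\C{A})$, the term $S(\cdot)y_0$ is already in $X_T$. For an inhomogeneity $h \in W^{1,1}(0,T;Y)$ the convolution $\int_0^t S(t-\tau)h(\tau)\,d\tau$ lies in $X_T$ (Pazy, Cor.~4.2.5). However, $h = \C{P}_iy + g$ involves the unknown $y$, which so far is only continuous.

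To handle this, we differentiate formally and consider the candidate derivative $z$, defined as the mild solution of
\begin{equation}
  z(t) = S(t)z_0 + \int_0^t S(t-\tau)\bigl[\C{P}_i(\tau)z(\tau) + \dot{\C{P}}_i(\tau)y(\tau) + g'(\tau)\bigr]\,d\tau,
\end{equation}
where $z_0 \coloneqq \C{A}y_0 + \C{P}_i(0)y_0 + g(0) \in Y$. Here $\dot{\C{P}}_i(\tau)$ denotes the strong derivative, which exists by \Cref{classical_setting-b}. The uniform boundedness principle, applied to the difference quotients, gives $\sup_t\|\dot{\C{P}}_i(t)\| < \infty$, and $\dot{\C{P}}_i(\cdot)y(\cdot)$ is strongly measurable and bounded. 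Existence of $z \in C([0,T];Y)$ then follows exactly as in Step 1.

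We next show that $w(t) \coloneqq y_0 + \int_0^t z(s)\,ds$ equals $y$. To do this, we verify that $w$ satisfies the same fixed point equation $w = \Phi w$, using Fubini and the identity $S(t)x - x = \int_0^t S(s)\C{A}x\,ds$ for $x \in \C{D}(\C{A})$. This is the technical core, and it amounts to a routine but careful integration by parts in the convolution. Uniqueness from Step 1 then gives $w = y$, so $y \in C^1([0,T];Y)$ with $y' = z$. A cleaner alternative would be to cite the known theorem on evolution systems for $\C{A} + \C{P}_i(t)$ with $C^1$ strongly differentiable bounded perturbations (Pazy, Thm.~5.4.8), combined with the $W^{1,1}$-forcing result. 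We would present the direct argument but mention this reference as a check.

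\textbf{Step 3 (classical regularity).} Once $y \in C^1([0,T];Y)$, the function $h = \C{P}_iy + g$ belongs to $W^{1,1}(0,T;Y)$ by the product rule $(\C{P}_iy)' = \dot{\C{P}}_iy + \C{P}_iy'$. The standard result for $C_0$-semigroups with $W^{1,1}$ forcing and $y_0 \in \C{D}(\C{A})$ then shows that $y(t) \in \C{D}(\C{A})$ for every $t$, that $y$ is a classical solution, and that $\C{A}y = y' - h \in C([0,T];Y)$. Hence $y \in C([0,T];\C{D}(\C{A}))$ with the graph norm, and therefore $y \in X_T$.
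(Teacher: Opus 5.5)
The paper does not prove this theorem. It states it as an adaptation of standard results and refers to Bensoussan--Lions. Your route is the standard one: Picard iteration for the mild solution, then differentiating the equation to get $C^1$ regularity, then the $W^{1,1}$-forcing result to get $y(t)\in\C{D}(\C{A})$. It matches the paper's proof of \Cref{lip_cont}(i), which uses exactly your differentiated system for $z=y_t$ with $z(0)=\C{A}y_0+\C{P}_i(0)y_0+(\C{B}_iu)(0)+f(0)$. Step 1, Step 3 and the uniqueness argument are fine.

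There is, however, a concrete error in the step you call the technical core. You define $z$ using $\dot{\C{P}}_i(\tau)y(\tau)$, with $y$ the Step 1 solution. When $\C{P}_i$ depends on $t$, the function $w(t)=y_0+\int_0^t z(s)\,ds$ then does \emph{not} satisfy $w=\Phi w$. Carry out the computation you describe, using Fubini, $\int_0^t S(s)\C{A}y_0\,ds=S(t)y_0-y_0$, and the product rule $\tfrac{d}{d\tau}[\C{P}_i(\tau)w(\tau)]=\dot{\C{P}}_i(\tau)w(\tau)+\C{P}_i(\tau)z(\tau)$. It gives
\[
  w(t)=(\Phi w)(t)+\int_0^t S(t-\tau)\int_0^\tau \dot{\C{P}}_i(\sigma)\bigl(y(\sigma)-w(\sigma)\bigr)\,d\sigma\,d\tau .
\]
The extra term vanishes only if you already know $w=y$. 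So appealing to uniqueness of the fixed point of $\Phi$ is circular.

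The repair is short; either of the following works.

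\textbf{Gr\"onwall on the difference.} Subtract the identity above from $y=\Phi y$. Then $e\coloneqq y-w$ solves
\[
  e(t)=\int_0^t S(t-\tau)\Bigl[\C{P}_i(\tau)e(\tau)-\int_0^\tau \dot{\C{P}}_i(\sigma)e(\sigma)\,d\sigma\Bigr]d\tau .
\]
Apply Gr\"onwall to $E(t)\coloneqq\sup_{s\le t}\|e(s)\|_Y$, using $\sup_t\|\dot{\C{P}}_i(t)\|<\infty$. This gives $e\equiv 0$.

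\textbf{Redefine $z$.} Define $z$ through the closed linear Volterra equation with $\dot{\C{P}}_i(\tau)\bigl(y_0+\int_0^\tau z(s)\,ds\bigr)$ in place of $\dot{\C{P}}_i(\tau)y(\tau)$. It is solvable by the same contraction argument as in Step 1, and then $w=\Phi w$ holds exactly, so uniqueness applies.

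With either fix, your argument is a complete, self-contained proof of what the paper only cites.
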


In particular, any classical solution admits the representation \eqref{vop_formula}.

\begin{corollary}
  If a classical solution of \eqref{abs1_dis_in} exists, then it is given by $y = \Lv u$ in \eqref{vop_formula}.
\end{corollary}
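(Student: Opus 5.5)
The corollary asks us to show that a classical solution coincides with the mild formula \eqref{vop_formula}. The plan is the standard semigroup differentiation argument, followed by the uniqueness already noted after \eqref{vop_formula}. Let $y\in X_T$ be a classical solution, and write
\[
g(\tau) \coloneqq \C{P}_i(\tau)y(\tau) + (\C{B}_i u)(\tau) + f(\tau).
\]
The first step is to check that $g\in L^1(0,T;Y)$. Indeed, $\tau\mapsto \C{P}_i(\tau)y(\tau)$ is continuous, because $\C{P}_i(\cdot)$ is strongly continuous, uniformly bounded by \eqref{unif_P}, and $y$ is continuous. Moreover $\C{B}_iu\in L^2(0,T;Y)$ and $f\in L^1(0,T;Y)$ by \Cref{ass_a}. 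So the equation reads $y'(\tau)=\C{A}y(\tau)+g(\tau)$ for every $\tau$. Under \Cref{classical_setting} the function $g$ is in fact continuous, so the equality holds pointwise.

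The second step fixes $t\in(0,T]$ and considers $w(\tau)\coloneqq S(t-\tau)y(\tau)$ for $\tau\in[0,t]$. Since $y(\tau)\in\D{\C{A}}$ and $y\in C^1([0,T];Y)$, we compute the difference quotient
\[
\frac{S(t-\tau-h)y(\tau+h)-S(t-\tau)y(\tau)}{h}
= S(t-\tau-h)\frac{y(\tau+h)-y(\tau)}{h} + \frac{S(t-\tau-h)-S(t-\tau)}{h}\,y(\tau).
\]
The first term tends to $S(t-\tau)y'(\tau)$, using the local uniform boundedness \eqref{exp_est} together with strong continuity. The second term tends to $-S(t-\tau)\C{A}y(\tau)$, because $y(\tau)\in\D{\C{A}}$. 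Hence
\[
w'(\tau)=S(t-\tau)\bigl(y'(\tau)-\C{A}y(\tau)\bigr)=S(t-\tau)g(\tau).
\]

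The third step integrates over $[0,t]$. This gives $y(t)-S(t)y_0=\int_0^t S(t-\tau)g(\tau)\,d\tau$, which is exactly \eqref{vop_formula} with $y$ in place of $\Lv u$. Finally, Grönwall's inequality shows that \eqref{vop_formula} has at most one solution, as remarked before the definition. So $\Lv u$ is well defined and equals $y$.

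The main obstacle is justifying the differentiation of $w$, in particular that the derivative is continuous so that the fundamental theorem of calculus applies. I would handle this directly. The map $\tau\mapsto S(t-\tau)g(\tau)$ is continuous when $g$ is continuous, which holds in the classical setting since $H^1\hookrightarrow C$. For merely integrable data one can instead argue with weak derivatives, or pair with functionals in $Y^*$ and use absolute continuity.
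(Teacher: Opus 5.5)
Your proof is correct and follows essentially the same route the paper relies on: the paper gives no separate proof and treats the corollary as a consequence of the standard classical semigroup theory behind \Cref{wpp_classic}, whose core is exactly your argument (differentiate $\tau\mapsto S(t-\tau)y(\tau)$, integrate over $[0,t]$, then invoke the Gr\"onwall uniqueness for \eqref{vop_formula}). Your ``main obstacle'' also needs no assumption on the data: since $y\in X_T$, the derivative $w'(\tau)=S(t-\tau)\bigl(y'(\tau)-\C{A}y(\tau)\bigr)$ is automatically continuous on $[0,t]$, so the fundamental theorem of calculus applies directly.
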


We next relax the regularity requirements on the data.  
Observe that the integral equation \eqref{vop_formula} is well-defined without assuming $y_0 \in \C{D}(\C{A})$, $\C{B}_i \in \C{L}(U;H^1(0,T;Y))$ or $\C{P}_i(\cdot)y \in C^1([0,T];Y)$.  
\Cref{ass_a} is sufficient.  
This motivates the following notion.

\begin{definition}
  Let $y_0 \in Y$ and $u \in U$.  
  A function $y = \Lv u \in C([0,T];Y)$ that solves \eqref{vop_formula} is called a \textbf{mild} solution of \eqref{abs1_dis_in}.
\end{definition}

Mild solutions do not exist in full generality, but in our setting they are well-posed.

\begin{theorem}
  \label{wpp_mild_th}
  Let $y_0 \in Y$ and $u \in U$.  
  Then \eqref{vop_formula} has a unique solution $y = \Lv u$ in $C([0,T];Y)$.  
  In particular, \eqref{abs1_dis_in} has a unique mild solution.
\end{theorem}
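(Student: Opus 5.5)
We obtain the mild solution as the unique fixed point of a Volterra-type affine map on $C([0,T];Y)$. Fix $y_0\in Y$ and $u\in U$, and set $g \coloneqq \C{B}_i u + f$. By \Cref{ass_a-b} we have $\C{B}_i u\in L^2(0,T;Y)\subset L^1(0,T;Y)$, and by \Cref{ass_a-e} $f\in L^1(0,T;Y)$, so $g\in L^1(0,T;Y)$. Define
\begin{equation}
  (\Phi y)(t) \coloneqq S(t)y_0 + \int_0^t S(t-\tau)\bigl[\C{P}_i(\tau)y(\tau) + g(\tau)\bigr]\,d\tau, \qquad t\in[0,T].
\end{equation}

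\textbf{Step 1: $\Phi$ maps $C([0,T];Y)$ into itself.} The term $t\mapsto S(t)y_0$ is continuous by strong continuity of the semigroup. The convolution $t\mapsto\int_0^t S(t-\tau)g(\tau)\,d\tau$ is continuous for any $g\in L^1(0,T;Y)$; this is the standard mild-solution fact. We verify it by approximating $g$ in $L^1$ by continuous (or simple) functions, using the uniform bound \eqref{exp_est} to pass to the limit uniformly in $t$.

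For the perturbation term we need $\tau\mapsto \C{P}_i(\tau)y(\tau)$ to be strongly measurable and bounded when $y\in C([0,T];Y)$. We write
\begin{equation}
  \C{P}_i(\tau)y(\tau)-\C{P}_i(s)y(s) = \C{P}_i(\tau)\bigl(y(\tau)-y(s)\bigr) + \bigl(\C{P}_i(\tau)-\C{P}_i(s)\bigr)y(s).
\end{equation}
The first piece is controlled by \eqref{unif_P}. The second tends to zero by the strong continuity in \Cref{ass_a-c}. Hence $\C{P}_i(\cdot)y(\cdot)\in C([0,T];Y)\subset L^1(0,T;Y)$, and the previous argument applies to it as well. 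This is the only mildly delicate point, since $\C{P}_i$ is merely strongly continuous rather than norm continuous.

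\textbf{Step 2: contraction.} For $y,z\in C([0,T];Y)$, the bounds \eqref{exp_est} and \eqref{unif_P} give
\begin{equation}
  \|(\Phi y)(t)-(\Phi z)(t)\|_Y \le Me^{\omega T}P_i\int_0^t\|y(\tau)-z(\tau)\|_Y\,d\tau .
\end{equation}
Iterating this estimate yields
\begin{equation}
  \|\Phi^n y-\Phi^n z\|_{C([0,T];Y)} \le \frac{(Me^{\omega T}P_iT)^n}{n!}\,\|y-z\|_{C([0,T];Y)},
\end{equation}
so some power $\Phi^n$ is a strict contraction. Alternatively, $\Phi$ itself is a contraction under the weighted norm $\sup_t e^{-\lambda t}\|y(t)\|_Y$ with $\lambda > Me^{\omega T}P_i$.

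\textbf{Step 3: conclusion.} By Banach's fixed point theorem (for the iterate $\Phi^n$, or for $\Phi$ in the weighted norm), $\Phi$ has a unique fixed point $y\in C([0,T];Y)$. This fixed point is exactly a solution of \eqref{vop_formula}, so we set $\Lv u\coloneqq y$. Uniqueness also follows from the Grönwall remark preceding the definition of mild solutions. This gives existence and uniqueness of the mild solution of \eqref{abs1_dis_in}.
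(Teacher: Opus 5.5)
Your proof is correct. The paper does not actually prove this theorem. It only remarks, just before defining mild solutions, that Grönwall's inequality forces \eqref{vop_formula} to have at most one solution, and it otherwise treats existence as standard semigroup theory.

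Your Banach fixed-point argument is exactly that standard argument. Either device you give works on all of $[0,T]$ with no smallness condition on $T$ or $P_i$: the bound $\frac{(Me^{\omega T}P_iT)^n}{n!}$ on the iterates, or the weighted norm $\sup_t e^{-\lambda t}\|y(t)\|_Y$ with $\lambda > Me^{\omega T}P_i$. It also gives uniqueness at the same stroke, so the paper's separate Grönwall remark becomes redundant rather than needed.

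You also correctly identified the one point the paper passes over. Under the hypotheses, $\C{P}_i$ is only strongly continuous, so you must check that $\tau\mapsto \C{P}_i(\tau)y(\tau)$ is continuous, hence Bochner integrable, for $y\in C([0,T];Y)$. Your splitting together with the uniform bound $P_i$ from the uniform boundedness principle handles this.
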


Sometimes the data are even less regular and live naturally in dual spaces.  
We then use duality to extend $\C{A} : \C{D}(\C{A}) \to Y$ to an operator
\[
  \overline{\C{A}} : Y \to \C{D}(\C{A}^*)',
\]
which generates a semigroup on $\C{D}(\C{A}^*)'$ extending $\{S(t)\}_{t \geqslant 0}$.  
By a standard abuse of notation, we again denote this extension by $\C{A}$ and its semigroup by $\{S(t)\}_{t \geqslant 0}$. In connection with this setting, we invoke the following assumption.

\begin{ass}\label{dual_setting}
\ 
\begin{enumerate}
    \item[\asspartlabel{dual_setting-a}] $y_0 \in \C{D}(\C{A}^*)'$ and $u \in U$;
    \item[\asspartlabel{dual_setting-b}] $\C{B}_i \in \C{L}(U;L^2(0,T;\C{D}(\C{A}^*)'))$; 
    \item[\asspartlabel{dual_setting-c}] $\C{P}_i(\cdot) : \C{D}(\C{A}^*)' \to \C{D}(\C{A}^*)'$ is bounded and
      $\C{P}_i(\cdot)y \in C([0,T];\C{D}(\C{A}^*)')$ for every $y$; and
      \item[\asspartlabel{dual_setting-d}]
      $f \in H^1(0,T;\C{D}(\C{A}^*)')$.
  \end{enumerate}
\end{ass}
\begin{theorem}
  \label{wpp_dual}
 Under \Cref{dual_setting}, \eqref{abs1_dis_in} has a unique solution  $$y \in C([0,T];\C{D}(\C{A}^*)')$$ with the same abstract representation as in the mild and classical cases.
\end{theorem}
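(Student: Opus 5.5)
The plan is to reduce \Cref{wpp_dual} to \Cref{wpp_mild_th}, applied in the larger state space $Z \coloneqq \C{D}(\C{A}^*)'$ instead of $Y$. The proof has three steps.

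\textbf{Step 1: the extended semigroup.} I would first record that the extension $\overline{\C{A}}$ generates a $C_0$-semigroup on $Z$. I would realize $Z$ as the completion of $Y$ under the norm $\|y\|_{Z} = \|(\lambda - \C{A})^{-1}y\|_Y$, for some fixed $\lambda$ in the resolvent set $\rho(\C{A})$. Then $(\lambda-\C{A})^{-1}$ extends to an isometric isomorphism $J : Z \to Y$. Under this isomorphism, $\overline{\C{A}}$ and its semigroup are similar to $\C{A}$ and $S(t)$ on $Y$. Strong continuity and the bound \eqref{exp_est} therefore transfer to $Z$ with the same constants $M,\omega$. This is the standard extrapolation-space construction.

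\textbf{Step 2: verify \Cref{ass_a} with $Y$ replaced by $Z$.}
\begin{itemize}
\item Part (a) is Step 1.
\item Part (c) is exactly \Cref{dual_setting-c}. By the uniform boundedness principle, it gives $\sup_t \|\C{P}_i(t)\|_{\C{L}(Z)}<\infty$, as in \eqref{unif_P}.
\item Part (d) follows because $f \in H^1(0,T;Z) \subset L^1(0,T;Z)$, and $y_0 \in Z$ by \Cref{dual_setting-a}.
\end{itemize}
The well-posedness argument behind \Cref{wpp_mild_th} does not use the weak-to-strong continuity in \Cref{ass_a-b}. It only needs the forcing to satisfy $\C{B}_i u \in L^2(0,T;Z) \subset L^1(0,T;Z)$, which \Cref{dual_setting-b} provides.

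\textbf{Step 3: the fixed-point argument in $C([0,T];Z)$.} Here I redo the core of \Cref{wpp_mild_th} rather than citing it as a black box. Define
\[
(\Phi y)(t) = S(t)y_0 + \int_0^t S(t-\tau)\bigl[\C{P}_i(\tau)y(\tau) + (\C{B}_iu)(\tau) + f(\tau)\bigr]\,d\tau .
\]
\begin{itemize}
\item \emph{$\Phi$ maps $C([0,T];Z)$ into itself.} For the term $\tau \mapsto \C{P}_i(\tau)y(\tau)$, continuity follows from the uniform bound on $\|\C{P}_i(t)\|_{\C{L}(Z)}$ together with strong continuity in $t$. The convolution of a $C_0$-semigroup with an $L^1(0,T;Z)$ function is continuous in $t$.
\item \emph{$\Phi$ is a contraction for an equivalent norm.} Set $P_i \coloneqq \sup_t \|\C{P}_i(t)\|_{\C{L}(Z)}$ and use the weighted norm $\sup_t e^{-\gamma t}\|y(t)\|_Z$ with $\gamma > M e^{\omega T} P_i$. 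Alternatively, estimate the iterates $\Phi^n$, whose Lipschitz constant is bounded by $(M e^{\omega T} P_i T)^n/n!$.
\end{itemize}
Banach's fixed-point theorem then gives a unique $y \in C([0,T];Z)$ satisfying \eqref{vop_formula} in $Z$. This is precisely the "same abstract representation" asserted in the theorem, and Grönwall's inequality gives uniqueness again.

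\textbf{Main obstacle.} The only delicate point is Step 1: the identification of $\C{D}(\C{A}^*)'$ with the extrapolation space, and the consistency of the extended semigroup with $S(t)$ on $Y$. In a non-reflexive $Y$ one should use the completion construction above rather than literal duality with $\C{D}(\C{A}^*)$. The consistency guarantees that when the data lie in $Y$, the solution obtained here coincides with the mild solution of \Cref{wpp_mild_th}. I would cite the standard result, e.g.\ \cite{pazy_semigroups_1983}, for this step.
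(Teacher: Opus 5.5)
Your proposal is correct and follows the route the paper has in mind. The paper gives no separate proof: it only asserts that the duality extension of $\C{A}$ generates a semigroup on $\C{D}(\C{A}^*)'$ extending $\{S(t)\}_{t\geqslant 0}$ and that the variation-of-parameters argument of the mild case carries over, and you fill in exactly those two steps (similarity on the extrapolation space with the same constants $M,\omega$, then the contraction/Gr\"onwall argument in $C([0,T];\C{D}(\C{A}^*)')$), with your non-reflexivity caveat being well placed since your completion coincides with $\C{D}(\C{A}^*)'$ precisely when $Y$ is reflexive, which is the identification the paper tacitly uses.
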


\subsection{Basics of the strategy-to-state map}\label{section_basic_S}

We now turn to continuity and differentiability properties of the strategy-to-state map $u \mapsto \C{S}_i(u)$. For each $j = 1,\dots,N$, define the extension operator $e_j(u_j) \in U$ by
\begin{equation}\label{h_ext}
  e_j(u_j) = (\hat u_k)_{k=1}^N,
  \qquad
  \hat u_k =
  \begin{cases}
    u_j, & k = j, \\
    0,   & k \neq j,
  \end{cases}
\end{equation}
and set $\C{B}_i^j \coloneqq \C{B}_i \circ e_j : U_j \to L^2(0,T;Y)$.  
Then $e_j(u_j) \in U$ for every $u_j \in U_j$ and $\|\C{B}_i^j\| \leqslant \|\C{B}_i\|$ for all $j$.  
By linearity of $\C{B}_i$, any $u = (u_1,\dots,u_N) \in U$ satisfies
\begin{align}
  \label{B_char}
  \C{B}_i u
  = \C{B}_i\Bigl(\sum_{j=1}^N e_j(u_j)\Bigr)
  = \sum_{j=1}^N \C{B}_i e_j(u_j)
  = \sum_{j=1}^N \C{B}_i^j u_j.
\end{align}

This decomposition yields a convenient splitting of the state operator $u \mapsto \C{S}_i(u)$:
\begin{equation}
  \label{dec}
  \C{S}_i(u)
  = \C{S}_i(0) + \sum_{j = 1}^N \C{S}_i^0(e_j(u_j)),
\end{equation}
where $\C{S}_i^0(e_j(u_j)) = y_j$ solves
\begin{equation}
  \label{abs1_dk}
  \begin{cases}
    y_t(t) = \C{A}y(t) + \C{P}_i(t)y(t) + (\C{B}_i^j u_j)(t)+f(t), & t > 0,\\
    y(0)   = 0,
  \end{cases}
\end{equation}
and $\C{S}_i(0) = y^0$ solves
\begin{equation}
  \label{abs1_dk0}
  \begin{cases}
    y_t(t) = \C{A}y(t) + \C{P}_i(t)y(t)+f(t), & t > 0,\\
    y(0)   = y_0.
  \end{cases}
\end{equation}

By \Cref{wpp_mild_th}, each operator $\C{S}_i^{0,j} \coloneqq \C{S}_i^0 \circ e_j$ is well-defined, linear and bounded.  
It is convenient to write the integral representation of $\C{S}_i(u)$ as
\begin{align}
  \label{sol_formula}
  \C{S}_i(u)(t)
  &= S(t)y_0
     + \int_0^t S(t-\tau)\left[\C{P}_i(\tau)\C{S}_i(0)(\tau) + f(\tau)\right]\,d\tau \nonumber \\
  &\quad + \sum_{j=1}^N \int_0^t S(t-\tau)\bigl[\C{P}_i(\tau)
      \C{S}_i^{0,j}(u_j)(\tau) + (\C{B}_i^j u_j)(\tau)\bigr]\,d\tau.
\end{align}

From \eqref{sol_formula} we obtain the following Lipschitz estimates.

\begin{proposition}
  \label[proposition]{lip_cont}
  Under the assumptions of \Cref{wpp_classic,wpp_mild_th,wpp_dual}, the solution map
  $u \mapsto \C{S}_i(u)$ is Lipschitz continuous:
  \begin{itemize}
    \item[\bf (i)] from $U$ to $X_T$ if $y_0 \in \C{D}(\C{A})$ (classical setting);
    \item[\bf (ii)] $U$ to $C([0,T];Y)$ if $y_0 \in Y$ (mild setting);
    \item[\bf (iii)] from $U$ to $C([0,T];\C{D}(\C{A}^*)')$ if $y_0 \in \C{D}(\C{A}^*)'$ (dual setting).
  \end{itemize}
\end{proposition}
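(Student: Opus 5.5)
Since the state equation is affine in $u$, the plan is to reduce everything to a bound on the difference of two solutions. For $u,v\in U$ set $w\coloneqq \C{S}_i(u)-\C{S}_i(v)$. Subtracting the two representations \eqref{vop_formula} makes $S(t)y_0$ and the $f$–term cancel, so $w$ solves the homogeneous integral equation
\begin{equation}
  w(t) = \int_0^t S(t-\tau)\bigl[\C{P}_i(\tau)w(\tau) + (\C{B}_i(u-v))(\tau)\bigr]\,d\tau ,
\end{equation}
which is the mild formulation of \eqref{abs1_dis_in} with $y_0=0$, $f=0$ and control $u-v$. Equivalently, by \eqref{dec}, $w=\sum_j \C{S}_i^{0,j}(u_j-v_j)$ with $f$ removed. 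Lipschitz continuity therefore amounts to showing that the linear map $z\mapsto w$ is bounded in each of the three norms, with constant $L$ independent of $u,v$.

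For the mild setting (ii), take norms and use \eqref{exp_est} and \eqref{unif_P}. This gives
\[
\|w(t)\|_Y \le Me^{\omega T}P_i\int_0^t\|w(\tau)\|_Y\,d\tau + Me^{\omega T}\sqrt{T}\,\|\C{B}_i(u-v)\|_{L^2(0,T;Y)},
\]
where Cauchy–Schwarz handles the control term. Gr\"onwall's inequality then yields
\[
\sup_t\|w(t)\|_Y\le C\|\C{B}_i\|\,\|u-v\|_U,
\]
with $C=Me^{\omega T}\sqrt T\exp(Me^{\omega T}P_iT)$. Here I would use that $\C{B}_i$ is bounded: it is linear and weak-to-strong continuous, hence strong-to-strong continuous. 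Continuity of $w$ in time comes from \Cref{wpp_mild_th}.

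The dual setting (iii) is the same computation in the space $\C{D}(\C{A}^*)'$. I would use the extended semigroup, which again satisfies a bound of the form \eqref{exp_est} (the paper reuses $M,\omega$). I would also use the uniform bound on $\C{P}_i(t)$ in $\C{L}(\C{D}(\C{A}^*)')$, obtained by uniform boundedness from \Cref{dual_setting-c}, and $\C{B}_i\in\C{L}(U;L^2(0,T;\C{D}(\C{A}^*)'))$. Existence and continuity come from \Cref{wpp_dual}.

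The classical setting (i) is the main obstacle, because the $X_T$-norm requires control of $w_t$ in $C([0,T];Y)$ and of $\C{A}w$ in $C([0,T];Y)$. I would proceed as follows.

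1. By \Cref{wpp_classic}, $w\in X_T$ is the classical solution with zero initial datum. Differentiate formally: $z\coloneqq w_t$ solves
\[
z_t=\C{A}z+\C{P}_i z+\C{P}_i'w+(\C{B}_i(u-v))_t,\qquad z(0)=(\C{B}_i(u-v))(0),
\]
using $w(0)=0$. To make this rigorous, write $w(t)=\int_0^t S(s)g(t-s)\,ds$ with $g=\C{P}_iw+\C{B}_i(u-v)$ and differentiate under the integral. This uses $g\in H^1(0,T;Y)$, which follows from \Cref{classical_setting-b,classical_setting-c} together with $w\in C^1([0,T];Y)$.
2. Since $z$ is the mild solution of this equation, a second Gr\"onwall estimate gives
\[
\|z\|_{C([0,T];Y)}\le C\bigl(\|(\C{B}_i(u-v))(0)\|_Y+\|w\|_{C([0,T];Y)}+\|\C{B}_i(u-v)\|_{H^1(0,T;Y)}\bigr).
\]
The first term is bounded via $H^1(0,T;Y)\hookrightarrow C([0,T];Y)$. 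The bound on $\sup_t\|\C{P}_i'(t)\|$ comes again from uniform boundedness applied to \Cref{classical_setting-b}.
3. Recover $\C{A}w$ from the equation itself: $\C{A}w=w_t-\C{P}_iw-\C{B}_i(u-v)$. Each term on the right is already controlled by $\|u-v\|_U$, using the embedding $H^1\hookrightarrow C$ for the control term.

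Combining the three steps bounds $\|w\|_{X_T}$ by $L\|u-v\|_U$.
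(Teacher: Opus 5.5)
Your proposal is correct and follows the paper's route. You subtract the two variation-of-parameters representations and apply Gr\"onwall in $Y$ (respectively in $\C{D}(\C{A}^*)'$), and in the classical case you differentiate in time and repeat the estimate for $w_t$. Your one deviation is the $C([0,T];\C{D}(\C{A}))$ part: you recover $\C{A}w = w_t - \C{P}_i w - \C{B}_i(u-v)$ from the equation, whereas the paper only says this bound follows by a ``very similar strategy.'' Yours is the more robust choice, since a direct Gr\"onwall estimate in $\C{D}(\C{A})$ would need $\C{B}_i(u-v)$ and $\C{P}_i(t)$ to act in $\C{D}(\C{A})$, which is not assumed.
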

\begin{proof}
    Notice that for $u, v \in U$, the difference $\Lv u - \Lv v$ solves the following implicit integral equation
    \begin{equation}
        [\Lv u - \Lv v](t) = \int_0^t S(t-\tau)\left\{\C{P}_i(\tau)\left[\Lv u - \Lv v\right](\tau) + \C{B}_i(u-v)(\tau)\right\}d\tau
    \end{equation}
    whereby
    \begin{align*}
        &\left\|[\Lv u - \Lv v](t)\right\|_{Y} \nonumber \\ 
        &\leqslant \int_0^t Me^{\omega(t-\tau)}\left\{P_i\left\|\left[\Lv u - \Lv v\right](\tau)\right\|_Y + \left\|\C{B}_i(u-v)(\tau)\right\|_Y\right\}d\tau \nonumber \\
        &\leqslant MP_i\int_0^t e^{\omega(t-\tau)}\left\|\left[\Lv u - \Lv v\right](\tau)\right\|_Y d\tau + Me^{\omega T}\int_0^t \left\|\C{B}_i(u-v)(\tau)\right\|_{\C{D}(\C{A})'}d\tau \nonumber \\
        &\leqslant MP_i\int_0^t e^{\omega(t-\tau)}\left\|\left[\Lv u - \Lv v\right](\tau)\right\|_Y d\tau + Me^{\omega T}\sqrt{T}\|\C{B}_i(u-v)\|_{L^2(0,T;Y)} \nonumber \\
        &\leqslant MP_i\int_0^t e^{\omega(t-\tau)}\left\|\left[\Lv u - \Lv v\right](\tau)\right\|_Y d\tau + Me^{\omega T}\sqrt{T}\|\C{B}_i\|\|u-v\|_U.
    \end{align*}
    Grönwall's inequality then gives
    \begin{align*}
        &\left\|[\Lv u - \Lv v](t)\right\|_Y \leqslant M^2P_i\|\C{B}_i\|\sqrt{T}\dfrac{e^{\omega T}}{\omega}\left(e^{\omega T}-1\right)\|u-v\|_U,
    \end{align*}
    and this finishes the proof for the case (ii). The case (iii) and the estimate in $C([0,T],\C{D}(\C{A}))$ for the case (i) follow a very similar strategy. 
    
    To finish the proof, we need an estimate for $\|\partial_t (\Lv u - \Lv v)(t)\|_{Y}.$ To that end, we notice that differentiating \eqref{abs1_dis_in} in time and making $z = y_t$ we have
    \begin{equation}\label{abs1_dis_in_dif} 
    \begin{cases}
        z_t(t) = \C{A}z(t) + \partial_t\C{P}_i(t)y(t) + \C{P}_i(t)z(t) + \partial_t(\C{B}_iu)(t) + \partial_t f(t), \qquad t > 0 \\
        z(0) = y_t(0) = \C{A}y_0 + \C{P}_i(0)y_0 +(\C{B}_i u)(0) + f(0) \in Y.
    \end{cases}
\end{equation} 
By using  (i) and (ii) in \Cref{wpp_classic}, we can estimate $\|\partial_t (\Lv u - \Lv v)(t)\|_{Y}$ just like we estimated $\|(\Lv u - \Lv v)(t)\|_{Y}$ using \eqref{abs1_dis_in_dif} in place of \eqref{abs1_dis_in}.
\end{proof}

The next result concerns differentiability.

\begin{proposition}\label[proposition]{diff_sm}
  Let $\C{O} \subset U$ be open (for any norm on $U$).  
  Then $\C{S}_i$ is continuously Fréchet differentiable on $\C{O}$ with derivative $\C{S}_i' = \C{S}_i^0$.  
  More precisely:
  \begin{itemize}
    \item[\bf (i)] $\C{S}_i : \C{O} \to X_T$ is $C^1$ if $y_0 \in \C{D}(\C{A})$;
    \item[\bf (ii)] $\C{S}_i : \C{O} \to L^2(0,T;Y)$ is $C^1$ if $y_0 \in Y$;
    \item[\bf (iii)] $\C{S}_i : \C{O} \to L^2(0,T;\C{D}(\C{A}))$ is $C^1$ if $y_0 \in \C{D}(\C{A})'$.
  \end{itemize}
\end{proposition}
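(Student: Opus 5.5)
The plan rests on the observation that $\C{S}_i$ is affine in $u$. By the decomposition \eqref{dec}, we can write $\C{S}_i(u) = \C{S}_i(0) + \Lambda_i u$, where
\begin{equation}
  \Lambda_i u \coloneqq \sum_{j=1}^N \C{S}_i^{0,j}(u_j)
\end{equation}
is the solution operator of the state equation with zero initial datum, zero source $f$ and control input $\C{B}_i u$. (This is $\C{S}_i^0$ in the statement; I read the $f$ appearing in \eqref{abs1_dk} as belonging to $\C{S}_i(0)$ only, as the sum in \eqref{dec} forces.)

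First I will verify that $\Lambda_i$ is linear. Take $u,v\in U$ and scalars $\alpha,\beta$. Then $\alpha\Lambda_i u+\beta\Lambda_i v$ satisfies the homogeneous version of the variation-of-parameters identity \eqref{vop_formula}, with $y_0=0$, $f=0$ and control $\C{B}_i(\alpha u+\beta v)$. This uses the linearity of $S(t-\tau)$, of $\C{P}_i(\tau)$, of $\C{B}_i$ and of the integral. Uniqueness of solutions, from Grönwall's inequality as noted after \eqref{vop_formula}, then gives $\Lambda_i(\alpha u+\beta v)=\alpha\Lambda_i u+\beta\Lambda_i v$.

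Next I need boundedness of $\Lambda_i$ in each target norm. This is exactly the Lipschitz estimate of \Cref{lip_cont}, since $\Lambda_i u-\Lambda_i v=\C{S}_i(u)-\C{S}_i(v)$ and so $\Lambda_i u = \C{S}_i(u)-\C{S}_i(0)$. In setting (i) the estimate is in $X_T$. In setting (ii) it is in $C([0,T];Y)$, which embeds continuously into $L^2(0,T;Y)$ with constant $\sqrt{T}$. In setting (iii) it is in the corresponding dual-type space $C([0,T];\C{D}(\C{A}^*)')$; there I will match the target space written in the statement by the same continuous embedding into $L^2$ in time, reading the statement's space in (iii) as the dual setting of \Cref{wpp_dual}. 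So $\Lambda_i$ belongs to $\C{L}(U;Z)$ for the relevant space $Z$.

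Fréchet differentiability then follows at once. For $u\in\C{O}$ and $h\in U$ with $u+h\in\C{O}$,
\begin{equation}
  \C{S}_i(u+h)-\C{S}_i(u)-\Lambda_i h = 0,
\end{equation}
so the remainder vanishes identically and $\C{S}_i'(u)=\Lambda_i$ for every $u$. The derivative map $u\mapsto \C{S}_i'(u)$ is constant, hence continuous into $\C{L}(U;Z)$, and therefore $\C{S}_i\in C^1(\C{O};Z)$. Openness of $\C{O}$ is only needed so that the Fréchet derivative makes sense.

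The phrase ``for any norm on $U$'' needs some care, since boundedness of $\Lambda_i$ is established for the given Banach norm on $U$. I will interpret the statement as referring to norms that are equivalent to, or stronger than, the original one, so that $\Lambda_i$ stays bounded. I expect the main obstacle to be the bookkeeping in the classical case (i). There the $X_T$-bound requires controlling $\partial_t\Lambda_i u$ through the differentiated system \eqref{abs1_dis_in_dif}, whose initial value $z(0)=(\C{B}_iu)(0)$ must be bounded by $\|u\|_U$. I will get that bound from $\C{B}_i\in\C{L}(U;H^1(0,T;Y))$ together with the embedding $H^1(0,T;Y)\hookrightarrow C([0,T];Y)$. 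The remaining steps are routine consequences of affinity.
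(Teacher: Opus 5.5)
Your proposal is correct and is essentially the paper's argument: the paper also uses Gr\"onwall uniqueness for the homogeneous integral equation satisfied by $\C{S}_i(u+h)-\C{S}_i(u)-\C{S}_i^0h$ to show that the remainder vanishes identically, and it obtains boundedness of the resulting constant derivative from the well-posedness results and the Lipschitz estimates of \Cref{lip_cont}. Your explicit exclusion of $f$ from $\C{S}_i^0$ and your reading of (iii) in the dual space $\C{D}(\C{A}^*)'$ are exactly what the paper's proof implicitly needs: its equation for $z_h$ is homogeneous only if $f$ is not part of $L_0$, and its uniqueness step for case (iii) is carried out in $C([0,T];\C{D}(\C{A}^*)')$.
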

\begin{proof}
    Let $u \in \C{O}$ and $h \in U$ such that $u+h \in \C{O}$. From the variation-of-parameters formula \eqref{vop_formula} we have, for all $t \in [0,T]$,
  \begin{equation}
    \label{diff_0}
    [\Lv(u+h) - \Lv u](t)
    = \int_0^t S(t-\tau)\Bigl\{\C{P}_i(\tau)\bigl[\Lv(u+h) - \Lv u\bigr](\tau)
      + \bigl(\C{B}_i h\bigr)(\tau)\Bigr\}\,d\tau.
  \end{equation}

  Next, we know that $\C{S}_i^0(h) = L_0h$. By defining 
  $$z_h(t) \coloneqq \bigl[\Lv(u+h) - \Lv u\bigr](t) - [L_0h](t),
    \qquad t \in [0,T]$$
  we have
  $$z_h(t) = \int_0^t S(t-\tau)\,\C{P}_i(\tau)\,z_h(\tau)\,d\tau,
    \qquad t \in [0,T]$$
    and, in particular, $z_h(0)=0.$

  By the same Grönwall-type argument used in the proof of \Cref{lip_cont} (applied with right-hand side equal to zero), the only solution of this homogeneous integral equation in the relevant space (either $X_T$, $C([0,T];Y)$, or $C([0,T];\C{D}(\C{A}^*)')$) is $z_h \equiv 0$.  
  Therefore
 $$\Lv(u+h) - \Lv u = L_0h,    \qquad\text{for all } h \text{ with } u+h \in \C{O}.$$
  Equivalently,
 $$\C{S}_i(u+h) - \C{S}_i(u) - L_0h = 0.$$

  Hence, for any of the norms considered in (i)–(iii),
  $$\frac{\|\C{S}_i(u+h) - \C{S}_i(u) - \C{S}_i^0(h)\|}{\|h\|_U} = 0,
    \qquad\text{whenever } h \neq 0 \text{ and } u+h \in \C{O},$$
  which shows that $\C{S}_i$ is Fréchet differentiable at $u$ with derivative
  $$\C{S}_i'(u)h = \C{S}_i^0(h),
    \qquad h \in U.$$

  The operator $\C{S}_i^0$ is linear by construction and bounded as a map
  $$
    \C{S}_i^0 : U \to X_T,\quad
    \C{S}_i^0 : U \to L^2(0,T;Y),\quad
    \C{S}_i^0 : U \to L^2(0,T;\C{D}(\C{A})),
  $$
  respectively, by the well-posedness results \Cref{wpp_classic,wpp_mild_th,wpp_dual} and the Lipschitz estimates in \Cref{lip_cont}.  
  In particular, the derivative does not depend on $u$, so $\C{S}_i'$ is constant on $\C{O}$ and therefore continuous in each of the three settings.

  This proves that $\C{S}_i$ is $C^1$ on $\C{O}$, with $\C{S}_i'(u) = \C{S}_i^0$ for all $u \in \C{O}$, in each of the three cases (i)–(iii).
\end{proof}

\section{Existence of equilibria for state constrained games}\label{sec:existence_equilibria}

In this section we investigate existence of equilibria for convex hyperbolic constrained Nash games. We consider a game with $N > 1$ players (or agents).
The strategy set of player $i$, $i\in\{1,\ldots,N\}$, is a reflexive Banach space $U_i$, and the player's private (or personal) constraint set is a nonempty, closed and convex subset $U_i^\ad \subset U_i$.  
We define the joint strategy space and the set of feasible strategies as $U = U_1\times\ldots \times U_N$ and $U^\ad = U_1^\ad\times\ldots \times U_N^\ad$, respectively.
For each $i$, we denote by $u_{-i}$ the projection of a bundle $u \in U$ onto $U_{-i} \coloneqq \prod_{j \neq i} U_j$,
and we write $u = (u_i,u_{-i})$ to highlight the $i$th component, without changing the order of the coordinates of $u$.

A generalized Nash equilibrium problem (GNEP) is a collection of $N$ coupled optimization problems of the form
\begin{equation}\label{gamei}\tag{$P_i$}
  \begin{cases}
    \text{Given } u_{-i} \in U_{-i}^\ad,\\[1mm]
    \text{Minimize } \C{J}_i(u_i,u_{-i})
    \text{ subject to (s.t.) } u_i \in U_i^\ad \cap \B{U}_i(u_{-i}).
  \end{cases}
\end{equation}
Here, $\C{J} = \{\C{J}_i\}_{i=1}^N$ is a family of objective functionals, and $\{\B{U}_i\}_{i=1}^N$ is a family of coupling constraint maps.  
Each objective $\C{J}_i$ is a real-valued function defined on an open set $O = O_1 \times \cdots \times O_N$, where $O_i$ is an open subset of $U_i$ containing $U_i^\ad$.  
For simplicity, we assume that the effective domain of each $\C{J}_i$ is contained in $U^\ad$.

Each coupling constraint map $\B{U}_i$ is a set-valued map from $O_{-i}$ to $O_i$.  
The set $\B{U}_i(u_{-i})$ represents the strategies available to player $i$ when the other players choose $u_{-i}$.  
We denote by $\B{U}$ the product map
$
  \B{U} \coloneqq \B{U}_1 \times \cdots \times \B{U}_N.
$A game with the structure above will be denoted by $G = (\B{U},\C{J})$.

We are interested in games where the coupling constraints are induced by the state equation.  
For each $i = 1,\dots,N$, we assume that $\B{U}_i$ has the form
\begin{equation}
  \label{set_map}
  \B{U}_i(u_{-i})
  = \{u_i \in O_i \,;\, \C{S}_i(u) \in \C{K}_i\},
\end{equation}
where $y_i \coloneqq \C{S}_i(u)$ denotes the solution of \eqref{abs1_dis_in}, whose existence is guaranteed by semigroup theory and \Cref{ass_a} (see \Cref{state_prop}), and $\C{K}_i$ is a nonempty, convex set which is assumed to be closed in some topology specified later.

To ensure that each player can solve their optimization problem for any fixed $u_{-i} \in U_{-i}$, we impose the following assumptions.

\begin{ass}\label{ass_c}
\ 
\begin{itemize}
  \item[\asspartlabel{ass_c-a}]
    For each $i = 1,\dots,N$, the functional $\C{J}_i : O \to \mathbb{R}$ is weakly lower semicontinuous, and for each $u_{-i} \in O_{-i}$ the map
    $\C{J}_i(\cdot,u_{-i}) : O_i \to \mathbb{R}$ is continuous.
  \item[\asspartlabel{ass_c-b}]
    For each $i = 1,\dots,N$, the set $U_i^\ad$ is nonempty, bounded, closed and convex.
  \item[\asspartlabel{ass_c-d}]
    There exists $u \in U^\ad$ such that $u \in \B{U}(u)$.
\end{itemize}
\end{ass}

\subsection{Constraint topology and regularity and general existence}

To guide the discussion, we now fix $W$ as a Banach space (not necessarily reflexive) such that $\C{K}_i \subset W$. As will be shown in this section, weak closedness of the maps $\B{U}_i(\cdot): U_\mt^\ad \rightrightarrows U_i$ indicates the minimal topological requirements on $W$, because weak closedness is, in the particular form of \eqref{set_map}, equivalent to weak upper semicontinuity. The latter is known to be a fundamental requirement for existence, see \cite{debreu_chapter_1982,pang_quasi-variational_2005,zame_competitive_1987} and, in particular, our previous work \cite{bongarti2025structureversusregularitysetvalued}. We make this precise in the lemma below.

\begin{proposition}\label[proposition]{lemma_closednessequiv}
    Assume that $\C{S}_i:U\to W$ is well-defined and that $\C{K}_i$ is closed in $W.$ Then, for each $i = 1, \cdots, N$, if the map $\C{S}_i : U \to W$ is weak-to-strong continuous, then the map $\B{U}_i(\cdot): U \rightrightarrows U_i$ is weakly closed.
\end{proposition}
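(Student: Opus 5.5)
We argue directly from the definition of weak closedness of a set-valued map: the graph ${\rm Gr}(\B{U}_i)$ must be sequentially closed in $U_{-i}\times U_i$ endowed with the weak topology. Concretely, take a sequence $(u_{-i}^n, u_i^n)$ with $u_i^n \in \B{U}_i(u_{-i}^n)$, and suppose $u_{-i}^n \rightharpoonup u_{-i}$ in $U_{-i}$ and $u_i^n \rightharpoonup u_i$ in $U_i$. We must show $u_i \in \B{U}_i(u_{-i})$, that is, $\C{S}_i(u_i,u_{-i}) \in \C{K}_i$ (together with the membership $u_i\in O_i$, addressed below).

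First, weak convergence in a finite product is equivalent to componentwise weak convergence, because the dual of $U$ is the product of the duals $U_j^*$. Hence the bundles $u^n \coloneqq (u_i^n,u_{-i}^n)$ converge weakly in $U$ to $u\coloneqq(u_i,u_{-i})$. By hypothesis $\C{S}_i:U\to W$ is weak-to-strong continuous, so $\C{S}_i(u^n) \to \C{S}_i(u)$ strongly in $W$. Since $u_i^n\in\B{U}_i(u_{-i}^n)$, definition \eqref{set_map} gives $\C{S}_i(u^n)\in\C{K}_i$ for every $n$. Because $\C{K}_i$ is closed in $W$, the limit satisfies $\C{S}_i(u)\in\C{K}_i$.

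The only delicate point is that \eqref{set_map} also requires $u_i\in O_i$, and $O_i$ is open, so it is not weakly closed in general. I would handle this by restricting the statement to the relevant admissible region. Either one reads the map as $\B{U}_i: U\rightrightarrows U_i$ with $O_i=U_i$, as the statement's domain suggests, or one intersects with $U_i^\ad$. The set $U_i^\ad$ is closed and convex, hence weakly closed by Mazur's lemma, and it is contained in $O_i$. In either reading the limit stays in the required set.

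I therefore expect no genuine obstacle beyond this bookkeeping about $O_i$. The remaining points are routine: equating weak convergence of the pair with weak convergence in $U$, and recognising that sequential weak closedness of the graph is the notion intended here.
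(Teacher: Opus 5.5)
Your argument is correct and is the paper's proof: take a sequence in the graph converging weakly, use weak-to-strong continuity of $\C{S}_i$ to get $\C{S}_i(u^n)\to\C{S}_i(u)$ in $W$, and conclude from closedness of $\C{K}_i$. The paper's proof passes over the requirement $u_i\in O_i$ in silence, so your remark on it is a genuine refinement, and your remedy is sound: take $O_i=U_i$, or intersect with the weakly closed set $U_i^\ad\subset O_i$, which is the setting in which the map is used in \Cref{upper_semi}.
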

\begin{proof}
    Let $(u^n)$ be a sequence in ${\rm Gr}(\B{U}_i)$, i.e., $u_i^n \in \B{U}_i(u_\mt^n)$ or $\C{S}_i(u^n) \in \C{K}_i$ for each $n$, and assume that $u^n \rightharpoonup u$ in $U$. By weak-to-strong continuity of $\C{S}_i$ we have $\C{S}_i(u^n) \to \C{S}_i(u)$ and, by closedness of $\C{K}_i$, we have $u_i \in \B{U}_i(u_\mt)$, and this ends the proof. 
\end{proof}

We notice that the \say{strong} part of the continuity of $\C{S}_i$ is essential in the previous lemma, because $W$ is not necessarily reflexive, hence convexity of $\C{K}_i$ does not guarantee weak closedness from closedness. If reflexivity of $W$ is available, then weak-to-weak continuity of $\C{S}_i$ would be enough.

Next, we want to show that with sufficient compactness of the spaces, one can show that $\C{S}_i$ is weak-to-strong continuous. 

\begin{proposition}\label[proposition]{lemma_weaktostrong_gen}
    Let $X$ be a Banach space such that $X \xhookrightarrow{c} W$, $\C{S}_i: U \to X$ is well-defined through \emph{\eqref{vop_formula}} and Lipschitz. Then, $\C{S}_i: U \to W$ is weak-to-strong continuous. 
\end{proposition}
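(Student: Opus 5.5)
The plan is to exploit the affine structure of $\C{S}_i$ rather than Lipschitz continuity alone. Lipschitz continuity of a nonlinear map gives no control under weak convergence, but the strategy-to-state map is affine. By the decomposition \eqref{dec}, and by the identity $\C{S}_i(u+h)-\C{S}_i(u)=L_0h$ obtained in the proof of \Cref{diff_sm}, we have
\[
\C{S}_i(u)=\C{S}_i(0)+\C{S}_i^0 u,\qquad u\in U,
\]
where $\C{S}_i^0:U\to X$ is linear. It is also bounded, since its operator norm equals the Lipschitz constant of $\C{S}_i:U\to X$ assumed in the statement. So the question reduces entirely to the linear bounded operator $\C{S}_i^0$.

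First, $\C{S}_i^0:U\to X$ is weak-to-weak continuous. This holds because any bounded linear operator is: for $\varphi\in X^*$, the functional $\varphi\circ\C{S}_i^0$ lies in $U^*$. Second, let $\iota:X\to W$ be the compact embedding. Then the composition $\iota\circ\C{S}_i^0:U\to W$ is a compact linear operator. Third, I will invoke (or reprove) the standard fact that a compact linear operator maps weakly convergent sequences to strongly convergent ones.

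For that fact I would argue as follows. Let $u^n\rightharpoonup u$ in $U$. The uniform boundedness principle makes $(u^n)$ bounded, so $(\C{S}_i^0u^n)$ is bounded in $X$ and hence relatively compact in $W$. Any subsequence therefore has a further subsequence converging strongly in $W$ to some $w$. By the first step, $\C{S}_i^0u^n\rightharpoonup \C{S}_i^0u$ in $X$, and hence also weakly in $W$, since $\iota$ is continuous and linear. Uniqueness of weak limits gives $w=\C{S}_i^0u$. The subsequence principle then yields convergence of the whole sequence: $\C{S}_i^0u^n\to\C{S}_i^0u$ in $W$. Adding the constant $\C{S}_i(0)$ gives $\C{S}_i(u^n)\to\C{S}_i(u)$ in $W$, which is the claim.

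The main obstacle is justifying linearity of $u\mapsto\C{S}_i(u)-\C{S}_i(0)$ in the precise sense needed, namely as a map into $X$ rather than merely into $C([0,T];Y)$. I will take this from uniqueness for the integral equation \eqref{vop_formula}. Since $\C{B}_i$ is linear, $\alpha\,[\Lv u-\Lv 0]+\beta\,[\Lv v-\Lv 0]$ and $\Lv(\alpha u+\beta v)-\Lv 0$ solve the same inhomogeneous integral equation with zero initial datum. The Grönwall uniqueness argument from \Cref{lip_cont} then forces them to coincide. Since $\C{S}_i$ is assumed well-defined into $X$ through \eqref{vop_formula}, both sides are elements of $X$, so linearity holds there too. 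If one wanted to avoid the affine structure altogether, the compactness/subsequence argument above would still go through for any map that is weak-to-weak continuous into $X$ and bounded on bounded sets. In this setting, however, that weak continuity again comes from the affine structure.
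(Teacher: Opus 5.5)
Your proof is correct and follows the same route as the paper. The affine structure of $\C{S}_i$ and its Lipschitz bound make $\C{S}_i:U\to X$ weak-to-weak (sequentially) continuous, and the compact embedding $X \xhookrightarrow{c} W$ then upgrades this to strong convergence in $W$. You just supply the details the paper leaves implicit: the Grönwall uniqueness argument for linearity of $u\mapsto \C{S}_i(u)-\C{S}_i(0)$, and the subsequence argument showing that a compact embedding maps weakly convergent sequences to norm-convergent ones.
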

\begin{proof}
    Since $\C{S}_i: U \to X$ is an affine bounded (since Lipschitz) map, it is automatically weak-to-weak continuous, hence $\C{S}_i: U \to W$ weak-to-strong continuous for any $W$ such that $X \xhookrightarrow{c} W.$
\end{proof}

\Cref{lemma_closednessequiv,lemma_weaktostrong_gen} lead to the minimal condition necessary to study such state constraints.

\begin{ass}\label{min_constraint}
   There exists Banach spaces $X$ and $W$ such that \begin{itemize}
       \item[\bf (i)] $X \xhookrightarrow{c} W$;
       \item[\bf (ii)] $\C{S}_i: U \to X$ is well-defined through \emph{\eqref{vop_formula}} and Lipschitz; and
       \item[\bf (iii)] $\C{K}_i \subset W$ is nonempty, closed and convex.
   \end{itemize}
\end{ass}

\begin{theorem}\label{upper_semi}
    Under \Cref{min_constraint}, the map $U_i(\cdot): U_\mt^\ad \rightrightarrows U_i^\ad$ is weakly upper semicontinuous.
\end{theorem}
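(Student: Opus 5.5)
The plan is to combine \Cref{lemma_weaktostrong_gen} with \Cref{lemma_closednessequiv}, and then use boundedness of the admissible sets to pass from weak closedness of the graph to weak upper semicontinuity.

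First, \Cref{min_constraint}(i)--(ii) are exactly the hypotheses of \Cref{lemma_weaktostrong_gen}. Hence $\C{S}_i : U \to W$ is weak-to-strong continuous. Since $\C{K}_i$ is closed in $W$ by \Cref{min_constraint}(iii), \Cref{lemma_closednessequiv} shows that the graph of $\B{U}_i(\cdot)$ is sequentially weakly closed. The graph is taken in $U_{-i}\times U_i$, identified with $U$ after reordering coordinates.

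Intersecting with the closed convex, hence weakly closed, set $U^\ad = U_{-i}^\ad \times U_i^\ad$, we find that
\[
{\rm Gr}\bigl(\B{U}_i \cap U_i^\ad\bigr) = {\rm Gr}(\B{U}_i)\cap U^\ad
\]
is sequentially weakly closed. Next we use \Cref{ass_c-b}: $U_i^\ad$ is bounded, closed and convex in the reflexive space $U_i$. By Eberlein--\v{S}mulian, it is therefore weakly compact and weakly sequentially compact. So the restricted map takes values in a fixed weakly compact set and has a (sequentially) weakly closed graph.

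The final step is the standard closed-graph argument for upper semicontinuity. Suppose the map is not weakly upper semicontinuous at some $\bar u_{-i}$. Then there is a weakly open $V \supset \B{U}_i(\bar u_{-i})\cap U_i^\ad$ and points $u^n_{-i}\to \bar u_{-i}$ weakly, together with $u_i^n \in (\B{U}_i(u^n_{-i})\cap U_i^\ad)\setminus V$. By weak sequential compactness of $U_i^\ad$, a subsequence satisfies $u_i^n \rightharpoonup \bar u_i$. Closedness of the graph gives $\bar u_i \in \B{U}_i(\bar u_{-i})\cap U_i^\ad \subset V$. On the other hand, $U_i^\ad\setminus V$ is weakly closed, which forces $\bar u_i\notin V$, a contradiction.

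The main obstacle is the gap between nets and sequences in the weak topology. Upper semicontinuity in the weak topology is formulated with weak neighbourhoods, while \Cref{lemma_closednessequiv} only delivers sequential closedness. I would handle this in one of two ways. The first is to work with the sequential notion of weak upper semicontinuity, as in \cite{bongarti2025structureversusregularitysetvalued}, which is the one needed for the existence theorem. Alternatively, one can note that on the bounded set $U^\ad$ the weak topology is metrizable whenever the $U_j$ are separable. If neither option applies, one repeats the closedness argument with bounded nets. This works because compact linear embeddings map bounded weakly convergent nets to norm-convergent nets, so $\C{S}_i$ is net-wise weak-to-strong continuous on bounded sets and the argument above goes through unchanged.
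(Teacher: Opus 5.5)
Your proposal is correct and takes the same route as the paper. Sequential weak closedness of the graph (from \Cref{lemma_weaktostrong_gen} and \Cref{lemma_closednessequiv}) plus weak compactness of $U_i^\ad$ gives weak upper semicontinuity; the paper cites \cite[Proposition 1.4.9]{aubin_set-valued_2009} for the intersection of a closed map with the constant weakly compact-valued map $F \equiv U_i^\ad$, whereas you prove that step directly by contradiction, and your bounded-net argument through $X \xhookrightarrow{c} W$ also settles the nets-versus-sequences point that the paper's citation passes over.
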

\begin{proof}
    Let $F: U_\mt^\ad \rightrightarrows U_i$ be defined as $F \equiv U_i^\ad$, hence $F$ is weakly upper semicontinuous (as a constant function) and, from \Cref{ass_c-b}, $F(u_\mt) = U_i^\ad$ is weakly compact for all $u_\mt \in U_\mt^\ad.$ Furthermore, from \Cref{lemma_weaktostrong_gen} combined with \Cref{lemma_closednessequiv} it follows that the map $\B{U}_i: U_\mt^\ad \rightrightarrows U_i$ is weakly closed. The result then follows from \cite[Proposition 1.4.9, p. 42]{aubin_set-valued_2009}.
\end{proof}

\begin{proposition}\label[proposition]{graph_convex}
    The bundle map $\B{U}: U^\ad \to U^\ad$ is graph-convex.
\end{proposition}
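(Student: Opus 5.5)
We argue directly from the definition: the bundle map $\B{U}$ is graph-convex if ${\rm Gr}(\B{U}) = \{(u,v) \in U^\ad\times U^\ad : v \in \B{U}(u)\}$ is a convex subset of $U\times U$. Here $v \in \B{U}(u)$ means $v_i \in \B{U}_i(u_{-i})$ for every $i$. By \eqref{set_map} this is equivalent to $\C{S}_i(v_i,u_{-i}) \in \C{K}_i$ for all $i = 1,\dots,N$. So we fix $(u,v),(\tilde u,\tilde v) \in {\rm Gr}(\B{U})$ and $\lambda \in [0,1]$, and set
\[
u^\lambda \coloneqq \lambda u + (1-\lambda)\tilde u, \qquad v^\lambda \coloneqq \lambda v + (1-\lambda)\tilde v .
\]
The goal is to show that $(u^\lambda,v^\lambda) \in {\rm Gr}(\B{U})$.

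First, $u^\lambda, v^\lambda \in U^\ad$. This holds because $U^\ad = \prod_i U_i^\ad$ and each $U_i^\ad$ is convex by \Cref{ass_c-b}; convexity of the $O_i$ is not needed since $U_i^\ad \subset O_i$. Next, the projection onto the $-i$ components and the insertion $(w_i,w_{-i}) \mapsto w$ are linear. Hence
\[
(v_i^\lambda,u_{-i}^\lambda) = \lambda (v_i,u_{-i}) + (1-\lambda)(\tilde v_i,\tilde u_{-i})
\]
in $U$.

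The key structural fact is that $\C{S}_i$ is affine on $U$. This follows from the decomposition \eqref{dec}, $\C{S}_i(u) = \C{S}_i(0) + \C{S}_i^0(u)$ with $\C{S}_i^0$ linear, which comes from uniqueness of solutions of \eqref{vop_formula} (\Cref{wpp_mild_th}, or \Cref{wpp_classic}/\Cref{wpp_dual} in the other settings) together with linearity of $\C{B}_i$ and $\C{P}_i(t)$. Equivalently, the computation in the proof of \Cref{diff_sm} shows $\C{S}_i(u+h) - \C{S}_i(u) = \C{S}_i^0 h$. Affinity then gives
\[
\C{S}_i(v_i^\lambda,u_{-i}^\lambda) = \lambda\, \C{S}_i(v_i,u_{-i}) + (1-\lambda)\, \C{S}_i(\tilde v_i,\tilde u_{-i}).
\]
Both terms on the right lie in $\C{K}_i$, and $\C{K}_i$ is convex. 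Hence the left-hand side lies in $\C{K}_i$, that is, $v_i^\lambda \in \B{U}_i(u_{-i}^\lambda)$. Since this holds for every $i$, we conclude $v^\lambda \in \B{U}(u^\lambda)$.

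The only point requiring care is the affinity claim, which needs a little attention. The term $f$ and the initial datum $y_0$ enter only the constant part $\C{S}_i(0)$, and the displayed definition \eqref{abs1_dk} of $\C{S}_i^0$ misleadingly includes $f$. To settle this, I would verify directly from \eqref{vop_formula} that $\lambda \C{S}_i(w) + (1-\lambda)\C{S}_i(\tilde w)$ solves \eqref{vop_formula} for the data $\lambda w + (1-\lambda)\tilde w$. This works because $\lambda + (1-\lambda) = 1$, so the terms $S(t)y_0$ and $\int_0^t S(t-\tau) f(\tau)\,d\tau$ recombine exactly. Uniqueness, via the Grönwall argument, then yields the identity. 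Everything else is elementary bookkeeping.
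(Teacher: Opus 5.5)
Your proof is correct and follows the paper's argument: $\C{S}_i$ is affine by the variation-of-parameters formula \eqref{vop_formula}, where $S(t)y_0$ and the $f$-integral recombine because the weights sum to one, and this is combined with convexity of $\C{K}_i$ and $U_i^\ad$. Your version is in fact more careful than the paper's, which works with single bundles $u,v$ satisfying $\C{S}_i(u),\C{S}_i(v)\in\C{K}_i$ and so literally shows convexity of the feasible set rather than of the graph; composing with the linear map $(u,v)\mapsto(v_i,u_{-i})$, as you do, is what graph-convexity requires, and your remark that $f$ should not appear in \eqref{abs1_dk} is also correct.
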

\begin{proof}
    Fix $\lambda \in (0,1).$ Let $u,v \in {\rm Gr}(U^\ad \cap \B{U})$, which means $u,v \in U^\ad$ and $\C{S}_i(u), \C{S}_i(v) \in \C{K}_i$. Since $\C{K}_i$ is convex, it follows that $$(1-\lambda)\C{S}_i(u) + \lambda \C{S}_i(v) \in \C{K}_i.$$ By using the integral representation \eqref{vop_formula} we see that $$(1-\lambda)\C{S}_i(u) + \lambda \C{S}_i(v) = \C{S}_i((1-\lambda)u + \lambda v) \in \C{K}_i$$ whereby \Cref{ass_c-b} implies that $(1-\lambda)u + \lambda v \in {\rm Gr}(U^\ad \cap \B{U}).$
\end{proof}

\begin{theorem}\label{exist_gen_N}
  The game $G = (\B{U},\C{J})$ has a generalized Nash equilibrium.
\end{theorem}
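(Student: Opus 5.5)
The plan is to apply the abstract existence result for games with graph-convex, weakly upper semicontinuous constraint maps from \cite{bongarti2025structureversusregularitysetvalued}, which is a Kakutani--Fan--Glicksberg argument applied to the best-response map $\C{V}$ in \eqref{intro_best}. I take \Cref{ass_a}, \Cref{ass_c} and \Cref{min_constraint} as standing. I also assume that each $\C{J}_i(\cdot,u_\mt)$ is convex, since the game is called convex, and that this is the convexity hypothesis used in the framework. I equip $U^\ad$ with the weak topology. By \Cref{ass_c-b} and reflexivity of the $U_i$, it is then a nonempty, convex, weakly compact subset of the locally convex space $(U,\text{weak})$, and this is the setting in which the fixed point theorem will be applied.

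\textbf{Step 1 (nonemptiness).} For each $u\in U^\ad$, the feasible set $U_i^\ad\cap\B{U}_i(u_\mt)$ is weakly closed by \Cref{lemma_closednessequiv,lemma_weaktostrong_gen}. It is convex because $\B{U}_i(u_\mt)$ is the preimage of the convex set $\C{K}_i$ under the affine map $\C{S}_i(\cdot,u_\mt)$. It is bounded by \Cref{ass_c-b}, hence weakly compact. Weak lower semicontinuity in \Cref{ass_c-a} then gives a minimizer, provided the set is nonempty. This is exactly where graph-convexity (\Cref{graph_convex}) enters. \Cref{ass_c-d} gives one point of the graph, so the domain of $\B{U}$ restricted to $U^\ad$ is nonempty and convex. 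I then reduce to this domain $D=\{u\in U^\ad: U_i^\ad\cap\B{U}_i(u_\mt)\neq\emptyset\ \forall i\}$, which is convex by graph-convexity and weakly closed by weak closedness of the graph together with weak compactness of $U^\ad$. Consequently $D$ is weakly compact and convex, and $\C{V}$ should map $D$ into $D$. I would check the latter using the fact that $v\in\C{V}(u)$ has $v_i$ feasible against $u_\mt$, with graph-convexity along the segment supplying the missing feasibility. If that step fails, I would instead follow the cited framework, which handles the domain issue precisely via graph-convexity.

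\textbf{Step 2 (convexity of values).} $\C{V}(u)$ is a product of argmin sets of convex functionals over convex sets, so it is convex.

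\textbf{Step 3 (closed graph).} This is the main obstacle. Take $u^n\rightharpoonup u$ and $v^n\in\C{V}(u^n)$ with $v^n\rightharpoonup v$. Weak upper semicontinuity (\Cref{upper_semi}) gives feasibility of $v_i$ for $u_\mt$. For optimality, fix any feasible $w_i\in U_i^\ad\cap\B{U}_i(u_\mt)$. I need feasible $w_i^n$ for $u^n_\mt$ with $w_i^n\to w_i$ strongly, which is the lower semicontinuity that we are not allowed to assume. The replacement is graph-convexity. Writing $(u,w_i)$ and $(u^n,v^n_i)$ as points of the convex graph, I set $w_i^n=(1-\lambda)w_i+\lambda v_i^n$ for a suitable $\lambda$, feasible for the corresponding convex combination of opponents. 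Then I use continuity of $\C{J}_i(\cdot,u_\mt)$, weak lower semicontinuity of $\C{J}_i$, and convexity, and finally let $\lambda\downarrow0$, to obtain $\C{J}_i(v_i,u_\mt)\le\C{J}_i(w_i,u_\mt)$. Since exactly this argument constitutes the main theorem of \cite{bongarti2025structureversusregularitysetvalued}, I would invoke it after verifying its hypotheses: graph-convexity (\Cref{graph_convex}), weak upper semicontinuity with weakly compact values (\Cref{upper_semi}), weak compactness and convexity of $U^\ad$, and \Cref{ass_c}.

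\textbf{Step 4.} Kakutani--Fan--Glicksberg then yields $\overline u\in\C{V}(\overline u)$, which is a generalized Nash equilibrium of $G=(\B{U},\C{J})$.
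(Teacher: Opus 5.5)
Your decisive step is the paper's entire proof: check graph-convexity (\Cref{graph_convex}) and weak closedness / weak upper semicontinuity (\Cref{lemma_closednessequiv,lemma_weaktostrong_gen}, \Cref{upper_semi}), then invoke Theorem~5.1 of \cite{bongarti2025structureversusregularitysetvalued}. The Kakutani scaffolding you build around that citation, however, contains two incorrect claims. The second one exposes a hypothesis that neither you nor the paper supplies.

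In Step~3, the point $w_i^n=(1-\lambda)w_i+\lambda v_i^n$ is feasible for the opponent profile $(1-\lambda)u_{-i}+\lambda u^n_{-i}$, not for $u^n_{-i}$. So it is not a competitor in player $i$'s problem at $u^n$ and cannot be compared with $v_i^n$. Letting $\lambda\downarrow 0$ does not help, since producing competitors at $u^n_{-i}$ is exactly the lower semicontinuity being avoided; do not describe this construction as the content of the cited theorem.

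In Step~1, your doubt about $\C{V}(D)\subset D$ is justified, and graph-convexity does not repair it. Take $Y=\BR$, $\C{A}=0$, $\C{P}_i=0$, $f=0$, $y_0=0$, $T=1$, $U_i=\BR$, $U_i^\ad=[0,1]$, $(\C{B}_1u)(t)=u_1+u_2$, $(\C{B}_2u)(t)=u_2-u_1$, $\C{K}_1=\{y\in C([0,1]):y(1)\in[3/2,2]\}$, $\C{K}_2=\{y\in C([0,1]):|y(1)|\le 1\}$, $\C{J}_1\equiv 0$, $\C{J}_2(u)=u_2$. Every stated hypothesis holds, including \Cref{ass_c-d} with $u=(1,1)$, and the constraint graphs are convex. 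But player~2's constraint is slack on $[0,1]^2$, so its unique best response is $u_2=0$. Player~1's feasible set is then $\{u_1\in[0,1]:u_1\in[3/2,2]\}=\emptyset$. Hence $\C{V}(D)\not\subset D$, and there is no equilibrium at all.

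The argument therefore needs an explicit nonemptiness hypothesis, e.g.\ $U_i^\ad\cap\B{U}_i(u_{-i})\neq\emptyset$ for every $u_{-i}\in U_{-i}^\ad$. This is presumably among the hypotheses of the cited Theorem~5.1. The paper's one-line proof does not derive it from \Cref{ass_c-d} either, and the example shows it cannot be derived from the stated hypotheses. You should state this hypothesis rather than claim that graph-convexity handles the domain issue.
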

\begin{proof}
The map $\mathcal{U}^\text{ad} \cap \mathcal{U}(\cdot)$ is graph-convex by proposition \ref{graph_convex}. The result then follows because proposition \ref{lemma_closednessequiv} and theorem \ref{upper_semi} show that Theorem~5.1 in \cite{bongarti2025structureversusregularitysetvalued} applies.
\end{proof}

\subsection{The Choice of the space $W$}\label{space_choice}

Based on \Cref{state_prop} and depending on how regular our data is, we distinguish two situations:

\begin{itemize}
  \item[\bf (I)] a {\bf classical state setting}, where
  \[
    y = \C{S}_i(u) \in C^1([0,T];Y)\cap C([0,T];\C{D}(\C{A}));
  \]
  \item[\bf (II)] a {\bf dual classical state setting}, where
  \[
    y = \C{S}_i(u) \in C^1([0,T];\C{D}(\C{A}^*)')\cap C([0,T];Y).
  \]
\end{itemize}

In both cases, the state constraint set for player $i$ will be a nonempty,
convex, and closed subset $\C{K}_i \subset W$ for a suitable choice of $W$.

In the classical state setting, we choose $W = C([0,T];E)$ for any $E$ such that
\begin{equation}\label{cl_inc}
  \C{D}(\C{A}) \xhookrightarrow{c} E \hookrightarrow Y
\end{equation}
so we can conclude, from the Aubin--Lions lemma \cite[Corollary 4, p. 84]{simon_compact_1986} that
\[
  C^1([0,T];Y)\cap C([0,T];\C{D}(\C{A})) \xhookrightarrow{c} W.
\]

Similarly, for the dual state setting, we need to assume that $Y \xhookrightarrow{c} \C{D}(\C{A}^*)'$. We choose $W = C([0,T];E)$ for any $E$ such that
\begin{equation}\label{cl_dinc}
  Y \xhookrightarrow{c} E \hookrightarrow \C{D}(\C{A}^*)',
\end{equation}
so we can conclude, again from the Aubin--Lions lemma, that
\[
  C^1([0,T];\C{D}(\C{A}^*)')\cap C([0,T];Y) \xhookrightarrow{c} W.
\]

In both cases, the conclusions of \Cref{lemma_weaktostrong_gen} are at our disposal.

Before we move forward, we include an example illustrating the ideas above.

\begin{example}\label{ex:2p_transport}
Let $T>0$, $c>0$, and $\Omega=(0,1)$.
Consider the first--order hyperbolic PDE
\begin{equation}\label{eq:transport}
  y_t + c\,y_x = p(t)y + u_1 + u_2 + f,
  \qquad \text{in} \ (0,T)\times(0,1),
\end{equation}
with initial condition $y(0,x)=y_0(x)$ and an inflow boundary condition $y(t,0)=g(t).$

This is a \emph{wave/transport} model in first order form and can be written in the form \eqref{abs1_dis_in}. We set
\[
  Y \coloneqq L^2(0,1),\qquad
  U_1=U_2\coloneqq L^2\bigl(0,T;L^2(0,1)\bigr),\qquad
  U=U_1\times U_2,
\]
and define $\C{A}:\C{D}(\C{A})\subset Y\to Y$ by
\[
  \C{A}y = -c\,y_x,
\]
with domain
\[
  \C{D}(\C{A})=\{y\in H^1(0,1)\,;\ y(0)=0\}.
\]
Then $\C{A}$ generates a $C_0$--semigroup on $Y=L^2(0,1)$ (the left/right shift semigroup). In that case, for example, we can take $E = H^s(0,T)$ for all $s \in (0,1)$ or $E = C([0,1]).$ 

If we have the remaining of the data complying with \Cref{classical_setting}, then \Cref{min_constraint} is satisfied and any game under the conditions discussed above will have an equilibrium. In particular, pointwise state constraints of the form \begin{equation}
    a \leqslant y(t,x) \leqslant b, \ \text{for all} \ (t,x) \in [0,T] \times [0,1]
\end{equation} ($a,b \in \mathbb{R}$, $a < b$) can be handled. \EX \end{example}

\subsection{Conditions on non-standard form and partial compactness}

In this subsection we want to treat two cases in which the chain of embeddings \eqref{cl_inc} and \eqref{cl_dinc} do not hold or hold only partially. If \eqref{cl_inc} does not hold for $Y$ but it does for any space $Z$ such that $Y \hookrightarrow Z$, we can essentially use the same argument by noticing that $C^1([0,T],Y) \hookrightarrow C^1([0,T];Z).$ A similar comment holds true when \eqref{cl_dinc} does not hold for $\C{D}(\C{A}^*)'.$ This rather simple observation is used in the example below.

\begin{example}\label{ex:partial_compactness_bounded_A}
Let $T>0$ and $\Omega=(0,1)$. We consider a two--player game where the state satisfies a \emph{spatially distributed but non-regularizing} linear evolution:
\begin{equation}\label{eq:bounded_A_state}
  y_t(t) = \C{A}y(t) + p(t)\,y(t) + u_1(t) + u_2(t) + f(t)
  \qquad\text{in }(0,T),
  \qquad y(0)=y_0.
\end{equation}
Here $p\in C^1([0,T])$ is scalar, and the controls are distributed in space:
\[
  U_1=U_2 \coloneqq L^2\bigl(0,T;L^2(\Omega)\bigr),
  \qquad U=U_1\times U_2.
\]
We set the state space
\[
  Y \coloneqq L^2(\Omega),
\]
and choose a \emph{bounded} generator $\C{A}\in\C{L}(Y)$, for instance
\[
  [\C{A}y](x) \coloneqq a(x)\,y(x)
  \quad\text{with } a\in L^\infty(\Omega).
\]
Then $\C{A}$ generates a $C_0$--semigroup on $Y$ given by $S(t)y=e^{t a(\cdot)}y$.

Because $\C{A}$ is bounded we have $\C{D}(\C{A})=Y$ with equivalent graph norm, hence $\eqref{cl_inc}$ would not hold. Let
\[
  Z \coloneqq H^{-1}(\Omega),
  \qquad E \coloneqq H^{-1/2}(\Omega),
  \qquad W \coloneqq C\bigl([0,T];E\bigr).
\]
Then
\[
  Y=L^2(\Omega)\hookrightarrow Z=H^{-1}(\Omega),
  \qquad
  L^2(\Omega)\xhookrightarrow{c}H^{-1/2}(\Omega),
  \qquad
  H^{-1/2}(\Omega)\hookrightarrow H^{-1}(\Omega).
\]
If the data satisfy the classical assumptions (e.g.\ $y_0\in Y$, $f\in H^1(0,T;Y)$, and the control enters through $\C{B}(u)=u_1+u_2$),
then the setting of \Cref{min_constraint} holds with
\[
  X \coloneqq C^1([0,T];Z)\cap C([0,T];Y),
  \qquad W=C([0,T];E),
\]
even though \eqref{cl_inc} fails for $Y$.

This setting allows us to impose closed convex constraints in $W$, such as the pointwise in time bound in the weaker norm
\[
  \C{K}_i
  \coloneqq
  \Bigl\{y\in C([0,T];H^{-1/2}(\Omega)):\ \|y(t)\|_{H^{-1/2}(\Omega)}\le R_i \ \forall t\in[0,T]\Bigr\},
\]
with $R_i>0$.
\EX
\end{example}

In some hyperbolic models the full generator $\C{A}$ on the state space $Y$ does
not have compact resolvent, so that $\C{D}(\C{A})\xhookrightarrow{c}Y$ fails.  
However, it is often the case that only one component of the state is
constrained, and this component enjoys better spatial regularity with a
compact embedding.  
In this subsection we show that the equilibrium theory can still be applied
in such situations by projecting the state onto the constrained component.

Let the state space be a product
\[
  Y = Y_{\mathrm{a}} \times Y_{\mathrm{b}},
\]
and assume that $\C{A}:\C{D}(\C{A})\subset Y\to Y$ generates a $C_0$--semigroup on $Y$
but does not necessarily have compact resolvent.  
We denote the canonical projection onto the component, say $Y_{\mathrm{a}}$, where the constrained variable is taken from, by
\[
  \Pi: Y \to Y_{\mathrm{a}},\qquad
  \Pi(y_{\mathrm{a}},y_{\mathrm{b}}) = y_{\mathrm{a}}.
\]

Let $\C{S}_i : U \to C([0,T];Y)$ be the strategy--to--state map defined by the hyperbolic state equation. We assume that for each $u\in U$ the constrained component $z \coloneqq \Pi(\C{S}_i(u))$ belongs to a space $C([0,T];E_{\mathrm{a}})$, where $Y_{\mathrm{a}} \xhookrightarrow[]{c} E_{\mathrm{a}}$ is a compact embedding.  

We then introduce the reduced state map
\[
  \hat{\C{S}}_i \coloneqq \Pi \circ \C{S}_i : U \to C([0,T];E_{\mathrm{a}}).
\]

If $\hat{\C{S}}_i$ is Lipschitz from $U$ into a space $X_{\mathrm{a}}$
such that $X_{\mathrm{a}} \xhookrightarrow[]{c} C([0,T];E_{\mathrm{a}}),$
then the weak--to--strong continuity arguments from \Cref{lip_cont} and the
abstract existence theory apply directly to $\hat{\C{S}}_i$, even though
$\C{D}(\C{A})\not\xhookrightarrow{c}Y$.  
In particular, state constraints depending only on the component
$z \in Y_{\mathrm{a}}$ can be treated without requiring compactness of the full generator.

\begin{example}Consider the third--order in time equation
\begin{equation}\label{eq:mgt}
  z_{ttt} + \alpha z_{tt} - c^2 \Delta z - b \Delta z_t = u_1 + u_2
  \quad\text{in } (0,T)\times\Omega,
\end{equation}
with suitable initial conditions and homogeneous Dirichlet boundary conditions on a bounded smooth domain
$\Omega\subset\BR^d$, which is a well known linear model for the propagation of acoustic waves; see e.g. \cite{bongarti_singular_2020}. 
A standard first--order formulation introduces
$
  y(t) = (z(t),z_t(t),z_{tt}(t)),
$
and yields an abstract Cauchy problem
\[
  y_t(t) = \C{A} y(t) + \C{B}(u)(t), \qquad y(0) = y_0,
\]
with
\[
\C{A}[z,z_t,z_{tt}] = \left[z_t, z_{tt}, -\alpha z_{tt} + c^2\Delta z + b\Delta z_t\right]
\]
\[
\C{D}(\C{A}) = \{(y_1,y_2,y_3) \in Y: c^2 y_1 + by_2 \in H^2(\Omega) \cap H_0^1(\Omega)\}
\]
on a product space of the form
$
  Y = Y_{\mathrm{a}} \times Y_{\mathrm{b}},
$
where, for instance,
\[
  Y_{\mathrm{a}} =  H_0^1(\Omega), \qquad 
  Y_{\mathrm{b}} = H_0^1(\Omega) \times L^2(\Omega).
\]
It is well known that the corresponding generator $\C{A}$ need not have compact
resolvent \cite{marchand_abstract_2012}, so that $\C{D}(\C{A})\xhookrightarrow{c}Y$ fails.

However, defining $\Pi(z,z_t,z_{tt}) = z$ and setting $E_{\mathrm{a}} = C(\overline\Omega).$
Then, for each $u\in U$, we have
\[
  z = \Pi(\C{S}_i(u)) \in C([0,T];C(\overline\Omega)).
\]

The state constraints now act only on $z$, for example
\[
  a_i(x) \le z(t,x) \le b_i(x)
  \quad\text{for all } (t,x)\in[0,T]\times\overline\Omega,
\]
with given continuous bounds $a_i,b_i:\overline\Omega\to\BR$.  
In terms of the reduced map $\hat{\C{S}}_i = \Pi\circ \C{S}_i$, we define
\[
  \C{K}_i^{\mathrm{a}}
  \coloneqq
  \Bigl\{
    z\in C([0,T];C(\overline\Omega))
    \,;\,
    a_i(x)\le z(t,x)\le b_i(x)
    \text{ for all }(t,x)\in[0,T]\times\overline\Omega
  \Bigr\},
\]
and the admissible state set
\[
  \B{U}_i(u_\mt)
  \coloneqq
  \bigl\{u \in U: \ \Pi(\C{S}_i(u)) \in \C{K}_i^{\mathrm{a}}\bigr\}.
\]

Crucially, the equilibrium existence theory only requires compactness for the
map
\[
  \hat{\C{S}}_i : U \to C([0,T];E_{\mathrm{a}}),
\]
not for the full map $\C{S}_i:U\to C([0,T];Y)$.  
Thus the lack of compact resolvent for $\C{A}$ does not prevent us from handling
hyperbolic models of the form \eqref{eq:mgt} as long as the constraints act
only on the component $z$ for which we can identify a compact embedding
$Y_{\mathrm{a}}\xhookrightarrow{c}E_{\mathrm{a}}$.
\end{example}

\section{Two examples}\label{sec:two_examples} 
In this section we provide two examples on which our results can be used.
\begin{example}[\bf The wave equation with interior control]
    Let $\Omega \subset \BR^d$ $(d = 2$ or $3)$ be a smooth domain and consider the wave equation with interior control:
    \begin{equation}
        \label{ex_wave1}
        \begin{cases}
            z_{tt} - \Delta z = u, & \text{in} \ (0,T) \times \Omega, \\
            z = 0, & \text{on} \ (0,T) \times \partial \Omega, \\
            z(0,\cdot) = z_0, \ z_t(0,\cdot) = z_1, & \text{in} \ \Omega.
        \end{cases}
    \end{equation}
    Such an equation can be put in the form \eqref{abs1_dis_in} as follows: consider the Laplacian operator $A=-\Delta$ with domain $\D{A} = H^2(\Omega) \cap H_0^1(\Omega).$ We set $ y = (z,z_t)^\top$, take $Y = H_0^1(\Omega) \times L^2(\Omega)$, $U = L^2(\Omega)$ and define $$\C{A} = 
    \begin{bmatrix}
        0 & I \\
        -A & 0
    \end{bmatrix}, \qquad \D{\C{A}} = \D{A} \times H_0^1(\Omega),$$ 
    $$\C{B} = 
    \begin{bmatrix}
        0 \\
        I
    \end{bmatrix}, \qquad \C{P}(t) \equiv 0.$$
    It is easy to see that \eqref{abs1_dis_in} with the above operators is an abstract representation of \eqref{ex_wave1}. Similarly (although maybe under different conditions), the wave equation with boundary Dirichlet/Neumann control can also be put in the form \eqref{abs1_dis_in}. For further details see \cite{lasiecka_control_2000}. \EX
\end{example}

\begin{example}[\bf Personalized advertising] \label{advertising} Let $\Omega = (0,1)$ and consider the advection equation 
\begin{equation}
    \label{advection}
    \begin{cases}
        a_t + a_x = -\delta a + \lambda u & \text{in} \ (0,T), \times \Omega, \\
        a(0,x) = a_0(x) > 0 & \text{in} \ \Omega, \\
        a(t,0) = 0.
    \end{cases}
\end{equation}
According to \cite{machowska_closed-loop_2022}, the above equation can be used as a model for the evolution of a company's goodwill. It is assumed that the spatial variable models a continuum of segments that the market has. The quantity $x \in [0,1]$ represents the usage experience of consumers in this segment. If we therefore consider the solution $a: (0,T) \times \overline{\Omega} \to \mathbb{R}$ of \eqref{advection}, then $a(t,x)$ represents the goodwill of this company at time $t$ and (for short) segment $x.$

The control variable $u: (0,T) \times \overline\Omega \to [0,K]$ is such that $u(t,x)$ represents the advertising policy aimed at segment $x$ at time $t$ and the coefficient $\lambda = \lambda(x)$ represents the effectiveness of such a strategy. The constant $K$ represents the upper limit of the company's budget for advertising for segment $x$. Meanwhile, since goodwill naturally depreciates, the coefficient $\delta = \delta(a)$ represents this amortization.

The zero boundary condition in this context means that the market is closed (for new customers), a condition that we only assume here for the sake of simplicity and illustration. See \cite{machowska_closed-loop_2022} for a further discussion of the modeling of the boundary condition. 

A simple application of Lumer--Phillips Theorem \cite[Theorem 4.3, p. 14]{pazy_semigroups_1983} implies that the operator $\C{A} \coloneqq -\partial_x$ with domain $\C{D}(\C{A}) \coloneqq \{a \in H^1(\Omega); a(0) = 0\}$ generates a $C_0$--semigroup of contractions in $L^2(\Omega).$ The details remaining to put \eqref{advection} into the form of \eqref{abs1_dis_in} are straightforward. \EX
\end{example}

\section{Characterization of equilibria}\label{sec:characterization} 

In this section we provide a characterization of equilibrium strategies via first-order optimality conditions. We achieve this in a rather abstract setting that covers our general cases. 

The problem that player $i$ needs to solve reads:
\begin{equation}\label{gameic}\tag{$P_i(u_\mt)$}
    \begin{cases}
        \text{Given } u_\mt \in U_\mt^\ad; \\
        \text{Minimize } \C{J}_i(u_i, u_\mt) \ \text{subject to } u_i \in U_i^\ad \cap \B{U}_i(u_\mt),
    \end{cases}
\end{equation}
where \begin{equation}
    \label{constraint_set_chac}
    \B{U}_i(u_\mt) = \{u_i \in U_i^\ad; \ \C{S}_i(u) \in \C{K}_i\}
\end{equation} with \Cref{min_constraint} in place. 

By definition, $u^*$ is a generalized Nash equilibrium if and only if, for all $i = 1, \cdots, N$, $u_i^*$ solves the problem 
\begin{equation}\label{gameic2}
        \text{Minimize } \C{J}_i(u_i, u_\mt^*) + I_{U_i^\ad}(u_i) + I_{\mathcal{K}_i}(\mathcal{S}_i(u_i, u_\mt^*)) \ \text{over} \ u_i \in U_i,
\end{equation} where $I_C$ denotes the indicator function associated with a non-empty, convex and closed set $C$. Due to the convexity of $\mathcal{J}_i$, $U_i^\ad$ and $\mathcal{K}_i$, this minimization problem is equivalent to finding $u^*$ such that
\begin{equation}
    \label{0diff}
    0 \in \partial\left(\C{J}_i(\cdot, u_\mt^*) + I_{U_i^\ad}(\cdot) + I_{\mathcal{K}_i}(\mathcal{S}_i(\cdot, u_\mt^*))\right)(u_i^*).
\end{equation}

Without further information, \eqref{0diff} is the most we can say about the optimality of $u_i^*$. Recall that $\C{J}_i(\cdot, u_\mt^*)$, $I_{U_i^\ad}$ and $I_{\C{K}_i} \circ \C{S}_i(\cdot, u_\mt^*)$ are proper convex functions due to \Cref{ass_c-d}. Hence, according to \cite[Theorem 1, p. 200]{ioffe_theory_2010}) we could use the sum rule of subdifferential calculus in \eqref{0diff} provided either $I_{U_i^\ad}$ or $I_{\C{K}_i} \circ \C{S}_i(\cdot, u_\mt^*)$ is continuous at a point of $U_i^\ad \cap \B{U}(u_\mt^*).$ Taking this for granted for the time being we rewrite \eqref{0diff} as
\begin{align}\label{1diff}
    0 \in \partial_i(\C{J}_i)(u^*) + \C{N}_{U_i^\ad}(u_i^*) + \partial_i (I_{\C{K}} \circ \C{S}_i(\cdot, u_\mt^*))(u_i^*)
\end{align} where we use $\partial_i$ to emphasize that the subdifferential is taken with respect to the decision variable of player $i$, and $\C{N}_C$ denotes the normal cone to $C \subset U_i$ defined at some $u_i\in C$ by
$$
\C{N}_C(u_i)=\{\mu_i\in U_i^*:\mu_i(v_i-u_i)\leqslant 0\:\forall v_i\in C\}.
$$

We would like now to apply the chain rule to the last term of \eqref{1diff}. For that we need to assume that $I_{\C{K}_i} \circ \C{S}_i(\cdot, u_\mt^*)$ has a continuity point (see \cite[Theorem 2, p. 201]{ioffe_theory_2010}) or that $\C{S}_i^0: U \to W$ is surjective (see \cite[Theorem 10.19, p. 202]{ioffe_theory_2010}). The second condition is often not true in our setting.

\begin{definition}\label{cont_point}
    We say that a point $u \in U$ is a continuity point w.r.t player $i$ if $u_i$ is a continuity point of the map $I_{\C{K}_i} \circ \C{S}_i(\cdot, u_\mt)$, i.e., if there exists $\varepsilon > 0$ such that
    \begin{equation}
        B_\varepsilon^i(0) \subset \C{S}_i(u) - \C{K}_i
    \end{equation}
    where $B_\varepsilon^i(0)$ denotes the ball in $W$ of radius $\varepsilon$ and centered at zero.
\end{definition}

Now, if for each $i=1, \cdots, N$, $u^*$ is a continuity point w.r.t. player $i$, we use the chain rule to obtain \begin{align*}
    0 \in 
    \partial_i(\C{J}_i)(u^*) + \C{N}_{U_i^\ad}(u_i^*) + [\C{S}_i^0]^*\C{N}_{\C{K}_i-\C{S}_i(0)}(\C{S}_i^0(u^*)). 
\end{align*} 

We then have the following optimality conditions:
\begin{theorem}\label{abs_opt}
    Let \Cref{min_constraint} hold, let $u^* \in \B{U}(u^*)$ be a Nash equilibrium and assume it to be a continuity point w.r.t. player $i$ for each $i.$ Then there exists $\Phi = (y_1^*, \cdots, y_N^*) \in X^N, \lambda \in U^*$ and $\mu = (\mu_1, \cdots, \mu_N) \in [W^*]^N$ such that the system
    \begin{equation}
        \label{opt_sys_abs} \begin{cases}
            y_i^* = \C{S}_i(u^*); \\
            \lambda_i \in \C{N}_{U_i^\ad}(u^*); \\
            \mu_i \in \C{N}_{\C{K}_i}(\C{S}_i(u^*));\\
            0 \in \partial_i(\C{J}_i)(u^*) + \lambda_i + [\C{S}_i^0]^*\mu_i \\
        \end{cases}
    \end{equation}
    is satisfied for each $i = 1, \cdots, N.$ Conversely if $(\Phi,\lambda,\mu) \in X^N \times U^* \times [W^*]^N$ is such that $(y_i^*,\lambda_i, \mu_i)$ solves \eqref{opt_sys_abs} for each $i$, then $u^*$ is a Nash equilibrium.
\end{theorem}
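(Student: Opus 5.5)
The plan is to reduce each player's problem, with $u_\mt^*$ frozen, to a single convex minimization over $U_i$ and then apply convex subdifferential calculus, namely the sum rule and the chain rule from \cite{ioffe_theory_2010}. The equivalence with \eqref{gameic2} and \eqref{0diff} has already been recorded above, so the work lies in splitting the subdifferential correctly.

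\emph{Necessity.} Fix $i$ and let $u^*$ be an equilibrium. Then $u_i^*$ minimizes the proper convex function $\C{J}_i(\cdot,u_\mt^*)+I_{U_i^\ad}+I_{\C{K}_i}\circ\C{S}_i(\cdot,u_\mt^*)$, so \eqref{0diff} holds. The constraint map is affine: $\C{S}_i(u_i,u_\mt^*)=\C{S}_i(0,u_\mt^*)+\C{S}_i^{0,i}u_i$, by \eqref{dec} and \Cref{diff_sm}. Lipschitz continuity from \Cref{lip_cont} and \Cref{min_constraint} makes the linear part bounded into $X\hookrightarrow W$.

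By \Cref{cont_point}, the function $I_{\C{K}_i}$ is bounded (by zero) on a $W$-neighbourhood of $\C{S}_i(u^*)$, so it is continuous there. Composing with the continuous affine map shows that $I_{\C{K}_i}\circ\C{S}_i(\cdot,u_\mt^*)$ is continuous at $u_i^*$, which lies in the domain of every summand. The Moreau--Rockafellar sum rule \cite[Theorem 1, p. 200]{ioffe_theory_2010} then gives \eqref{1diff}. Here I use that $\C{J}_i(\cdot,u_\mt^*)$ is finite and continuous by \Cref{ass_c-a}, so the sum rule applies in either order.

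Next comes the chain rule for a convex function composed with an affine map, with the outer function continuous at a point of the range \cite[Theorem 2, p. 201]{ioffe_theory_2010}. It gives
\[
\partial_i\bigl(I_{\C{K}_i}\circ\C{S}_i(\cdot,u_\mt^*)\bigr)(u_i^*)=[\C{S}_i^{0}]^*\partial I_{\C{K}_i}(\C{S}_i(u^*))=[\C{S}_i^{0}]^*\C{N}_{\C{K}_i}(\C{S}_i(u^*)).
\]
Here the adjoint of the partial map $e_i$-restricted $\C{S}_i^0$ is meant. Picking $\lambda_i\in\C{N}_{U_i^\ad}(u_i^*)$, $\mu_i\in\C{N}_{\C{K}_i}(\C{S}_i(u^*))\subset W^*$ and setting $y_i^*=\C{S}_i(u^*)\in X$ yields \eqref{opt_sys_abs}, which is the system displayed just before the theorem after translating $\C{K}_i-\C{S}_i(0)$.

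\emph{Sufficiency.} Suppose \eqref{opt_sys_abs} holds. Subgradients of each summand at $u_i^*$ add to a subgradient of the sum; this inclusion "$\supseteq$" is always true and needs no qualification. Likewise $[\C{S}_i^0]^*\mu_i$ is a subgradient of the composition, by the direct inequality $\mu_i(\C{S}_i(v_i,u_\mt^*)-\C{S}_i(u^*))\le 0$ for feasible $v_i$. Hence $0$ lies in the subdifferential of the sum in \eqref{0diff}. By convexity, $u_i^*$ then minimizes \eqref{gameic2}, and since this holds for all $i$, $u^*$ is an equilibrium. Membership $\mu_i\in\C{N}_{\C{K}_i}(\C{S}_i(u^*))$ already forces $\C{S}_i(u^*)\in\C{K}_i$, and $\lambda_i$ forces $u_i^*\in U_i^\ad$, so feasibility is built in.

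\emph{Main obstacle.} The delicate step is the qualification for the sum and chain rules. The interiority condition is formulated in $W$ via $B^i_\varepsilon(0)\subset\C{S}_i(u)-\C{K}_i$, and this must be read as: $\C{S}_i(u^*)$ is an interior point of $\C{K}_i$ in $W$. That reading is what continuity of $I_{\C{K}_i}$ at $\C{S}_i(u^*)$ requires. With it, the composite is continuous at a point of the common domain, because $\C{S}_i^{0}$ is bounded into $W$ through $X\hookrightarrow W$.

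A second point to check is convexity of $\C{J}_i(\cdot,u_\mt^*)$. The section tacitly uses it, and it is needed for both \eqref{0diff} and the converse; I would state it explicitly as an assumption.
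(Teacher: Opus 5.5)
Your proposal is correct and follows the paper's own route: the paper's proof simply points back to the preceding discussion, which reduces the problem to \eqref{0diff} and then applies the sum rule \cite[Theorem 1, p. 200]{ioffe_theory_2010} and the chain rule \cite[Theorem 2, p. 201]{ioffe_theory_2010} under the continuity-point hypothesis. You are more careful than the paper in checking the qualification conditions, in proving the converse via the unconditional inclusion of sums of subgradients, and in flagging the tacit convexity of $\C{J}_i(\cdot,u_\mt^*)$; one side remark is that under the literal interiority reading of \Cref{cont_point} that you correctly adopt, $\C{N}_{\C{K}_i}(\C{S}_i(u^*))=\{0\}$, so the multipliers $\mu_i$ necessarily vanish, whereas interiority at a Slater point $\C{S}_i(\hat v_i,u_\mt^*)$ with $\hat v_i\in U_i^\ad$ would suffice for both calculus rules and permit nontrivial $\mu_i$.
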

\begin{proof}
    The proof follows basically from our discussion before the theorem. In fact, assuming the existence of a point of continuity, $u^* \in \B{U}(u^*)$ is a Nash equilibrium if and only if \begin{align*}
    0 \in 
    \partial(\C{J}_i)(u^*) + \C{N}_{U_i^\ad}(u_i^*) + [\C{S}_i^0]^*\C{N}_{\C{K}_i-\C{S}_i(0)}(\C{S}_i^0(u^*)). 
\end{align*}
\end{proof}
However, it is important to notice that the system of optimality conditions we derive here, in its abstract form, provides very little information towards an explicit characterization of the equilibrium. See the example in the next section.

If more is known about the objective functions, then the optimality system \eqref{opt_sys_abs} can be improved.

\begin{example}
    It is very common in optimization to consider objective functions of the form \begin{equation}
        \label{obj_sep} \C{J}_i(u) = \C{J}_i^1(\C{S}_i(u)) + \C{J}_i^2(u_i).
    \end{equation}
    In this case we have \begin{align*}
        \partial_i(\C{J}_i(\cdot, u_\mt^*))(u_i^*) &= \partial_i(\C{J}_i^1 \circ \C{S}_i(\cdot, u_\mt^*))(u_i^*) + \partial_i (\C{J}_i)(u_i^*) \\ &= [\C{S}_i^0]^* \partial_i\C{J}_i^1(\C{S}_i(u^*)) + \partial_i \C{J}_i^2(u_i^*).
    \end{align*}
    Then, by introducing the adjoint state 
    \begin{equation}
        p_i^* \in [\C{S}_i^0]^*(\partial_i \C{J}_i^1(\C{S}_i(u^*))+\mu_i), \qquad \mu_i \in \C{N}_{\C{K}_i}(\C{S}_i(u^*)),
    \end{equation} the optimality system \eqref{opt_sys_abs} then reads 
    \begin{equation}
        \label{opt_sys_abs2} \begin{cases}
            y_i^* = \C{S}_i(u^*); \\
            p_i^* \in [\C{S}_i^0]^*(\partial_i \C{J}_i^1(\C{S}_i(u^*))+\mu_i); \\
            \lambda_i \in \C{N}_{U_i^\ad}(u^*); \\
            \mu_i \in \C{N}_{\C{K}_i}(\C{S}_i(u^*));\\
            0 \in \partial_i(\C{J}_i^2)(u_i^*) + \lambda_i + p_i^*.
        \end{cases}
    \end{equation} \EX
\end{example}

\section{A finite game constrained by the linearized isothermal Euler system for gas dynamics on a network} \label{ex_gas}

We now put the abstract machinery developed above to work on a concrete example: a noncooperative game in which a gas market is coupled with a physical transport network. Gas moves through the pipes according to a linearized isothermal Euler system of the form \eqref{iso2ex}, and each player perceives this shared physical system a little differently, since every player relies on its own model of how the flow behaves on each pipe. Because of this asymmetry the game cannot be recast as a single optimization problem driven by one common objective (a so-called potential game); it genuinely requires the graph-convexity approach built in the previous sections.

\subsection{The state equation and network topology}

 \begin{figure}[ht]
	   \centering
	   \begin{tikzpicture}[scale=.55, transform shape]
		      \node [circle, draw,color=blue] (g) at (1, 3) {$v_1$} ;
		      \node [circle, draw] (c) at (4, 3) {$v_2$} ;
		      \node [circle, draw, color=red] (d) at (7, 1.5) {$v_4$} ;
		      \node [circle, draw,color=red] (e) at (7, 4.5) {$v_3$} ;
		      \draw [->,line width=1] (g) -- (c) node[pos=.3,above] {$e_1$} ;
		      \draw [->,line width=1] (c) -- (d) node[pos=.3,above] {$e_3$};
		    \draw [->,line width=1] (c) -- (e) node[pos=.3,above] {$e_2$};
	   \end{tikzpicture}
      \caption{$3$-pipes gas network}
	   \label{net1}
    \end{figure} 

The gas network is modeled as a directed graph $\mathcal{G} = (\mathcal{V}, \mathcal{E})$: the vertices $\mathcal{V} = \{v_1, v_2, v_3, v_4\}$ are the junctions where pipes meet or where gas enters or leaves the system, and the edges $\mathcal{E} = \{e_1, e_2, e_3\}$ are the pipes themselves; see \cref{net1}. To keep the notation light, every pipe is normalized to unit length, so a point along pipe $e_k$ is described by a single coordinate $x \in (0,1)$, with $x=0$ and $x=1$ marking its two ends.

The four junctions play different roles. Vertex $v_2$ is an internal node, where three pipes meet and gas is simply redistributed; $v_1$ is where gas is injected into the network (producer); and $v_3$, $v_4$ are where gas is withdrawn (consumer or wholesaler). We record this partition as
\begin{equation}
    \label{vertices}
    \mathcal{V}^\circ = \{v_2\}, \qquad \mathcal{V}_-^\partial = \{v_1\}, \qquad \mathcal{V}_+^\partial = \{v_3,v_4\}.
\end{equation}
Physically, the network therefore has three points of control: an operator can set the gas pressure at the inlet $v_1$, while two downstream suppliers can each set how much gas they draw off at $v_3$ and $v_4$. These three quantities will become the three players' strategies below.

On each pipe $k \in \{1,2,3\}$, over a time horizon $[0,T]$, the state of the gas is described by two quantities: the pressure $p^k(t,x)$ and the mass flux $q^k(t,x)$, which measures how much gas moves past the point $x$ at time $t$. Under isothermal conditions, and after linearizing around a reference equilibrium, these two quantities obey the system
\begin{equation}\label{iso2ex}
    \begin{cases}
        p_t^k + c^2q_x^k = 0, & t \in (0,T), \; x \in (0,1), \\[1mm]
        q_t^k + p_x^k = f_1^k p^k + f_2^k q^k, & t \in (0,T), \; x \in (0,1),
    \end{cases}
\end{equation}
where $c>0$ is the speed of sound. The first equation is a conservation law: pressure changes in time are balanced by how the flux varies along the pipe. The second is a momentum balance, and the terms on the right, weighted by the coefficients $f_1^k, f_2^k$, capture the linearized effects of pipe friction and elevation change on pipe $k$.

So far the three pipes evolve according to independent copies of the same equation; what turns them into a single network is what happens at the junction $v_2$, where pipes $1$, $2$ and $3$ meet. Physically, gas cannot pile up or vanish at a junction, and its pressure cannot jump discontinuously across it. These two facts, continuity of pressure and conservation of mass (Kirchhoff's law), are encoded as
\begin{align}
    \label{pressure_cont}
    &p^1(t,1) = p^2(t,0) = p^3(t,0), \quad &&t \in (0,T), \\
    \label{flow_cont}
    &q^1(t,1) - q^2(t,0) - q^3(t,0) = 0, \quad &&t \in (0,T),
\end{align}
where we have implicitly assumed that all three pipes share the same cross-sectional diameter. Finally, the system starts from a given initial pressure and flux profile on each pipe:
\begin{equation} \label{init_cond}
    p^k(0,x) = p_0^k(x) \quad \text{and} \quad q^k(0,x) = q_0^k(x), \quad k = 1, 2, 3.
\end{equation}

\subsection{Boundary homogenization and operator formulation}

The abstract framework of \eqref{abs1_dis_in} is written for systems driven by a distributed source term, with zero data on the boundary. In our gas network, however, the three players act directly on the boundary: they each fix a pressure or a flux at one of the network's open ends. To bring the model into the required form we use a standard device from boundary control theory known as \emph{lifting}: we subtract from the true state a simple, explicitly known function that carries all of the nonzero boundary values, so that what remains solves a problem with homogeneous boundary data, at the price of an extra forcing term generated by the lifting itself.

Concretely, let $\mathbf{u}(t) = [u_1(t), u_2(t), u_3(t)]^\top \in \mathbb{R}^3$ collect the three boundary controls: $u_1(t) = p^1(t,0)$ is the pressure the operator sets at the network's inlet, while $u_2(t) = q^2(t,1)$ and $u_3(t) = q^3(t,1)$ are the fluxes withdrawn by the two suppliers at their respective outlets.

We collect all six boundary values of the network state, pressure and flux at each of the three pipe-ends touching the boundary, into a single vector $\mathbf{b}(t) \in \mathbb{R}^6$; only three of its six entries are actually controlled, the rest being left as zero placeholders since no condition is imposed there. The matrix $M$ simply records which slot each control feeds into:
\begin{equation}\label{bdata}
    \mathbf{b}(t) = [p^1(t,0), 0, 0, q^2(t,1), 0, q^3(t,1)]^\top, \qquad 
    M = \begin{bmatrix}
        1 & 0 & 0 \\ 0 & 0 & 0 \\ 0 & 0 & 0 \\ 0 & 1 & 0 \\ 0 & 0 & 0 \\ 0 & 0 & 1
    \end{bmatrix}.
\end{equation}
To turn this boundary vector into something that can be subtracted from the actual state, we need to spread it out over each pipe. For $\xi \in (0,1)$ define
\begin{equation}
    \mathbb{B}_0^k = \begin{bmatrix} 0 & -c^2 \\ 1 & 0 \end{bmatrix}, \qquad 
    \mathbb{B}_1^k(\xi) = \begin{bmatrix} 1-\xi & 0 \\ 0 & \xi \end{bmatrix}, \quad k=1,2,3.
\end{equation}
Here $\mathbb{B}_1^k(\xi)$ is nothing more than a linear interpolation in space, decaying from full weight at one end of the pipe to zero at the other. Stacking these over the three pipes gives the network-wide matrices $\mathbb{B}_0 = \text{diag}(\mathbb{B}_0^1, \mathbb{B}_0^2, \mathbb{B}_0^3)$ and $$\mathbb{B}_1(\vec{\xi}) = \text{diag}(\mathbb{B}_1^1(\xi_1), \mathbb{B}_1^2(\xi_2), \mathbb{B}_1^3(\xi_3)),$$ where $\vec{\xi} = [\xi_1, \xi_2, \xi_3]^\top$.

Write the full physical state of the network as
$$\mathbf{z}(t,\vec{\xi}) = [p^1(t,\xi_1), q^1(t,\xi_1), p^2(t,\xi_2), q^2(t,\xi_2), p^3(t,\xi_3), q^3(t,\xi_3)]^\top,$$
collecting pressure and flux on all three pipes. Subtracting the interpolated boundary data produces the homogenized state
\begin{equation}\label{newvar}
    \mathbf{y}(t,\vec{\xi}) \coloneqq \mathbf{z}(t,\vec{\xi}) - \mathbb{B}_1(\vec{\xi})\mathbf{b}(t).
\end{equation}
By construction, $\mathbf{y}$ satisfies homogeneous boundary conditions at the actuated ends, exactly as the abstract theory requires, while all the boundary information is now carried by the explicit correction term $\mathbb{B}_1(\vec{\xi})\mathbf{b}(t)$.

The natural state space is $Y = [L^2(0,1) \times L^2(0,1)]^3$, square-integrable pressure and flux on each pipe, equipped with the physically motivated energy norm
\begin{equation}
    \|\mathbf{y}\|_{Y}^2 = \sum_{k=1}^3 \int_0^1 \left[ |\tilde{p}^k(\xi_k)|^2 + c^2 |\tilde{q}^k(\xi_k)|^2 \right] d\xi_k,
\end{equation}
which, up to the factor $c^2$, measures the total acoustic energy stored in the pipe network. The dynamics is generated by the operator $A = \text{diag}(A_1, A_2, A_3)$, $A_k = \begin{bmatrix} 0 & -c^2\partial_x \\ -\partial_x & 0 \end{bmatrix}$, acting on functions that satisfy both the homogeneous boundary conditions at the actuated ends and the coupling conditions \eqref{pressure_cont}--\eqref{flow_cont} at the internal junction:
\begin{equation}\label{compactsmooth}
    \mathcal{D}(A) \coloneqq \left\{\mathbf{y} \in [H^1(0,1) \times H^1(0,1)]^3 : 
    \begin{matrix}
        \mathbf{y} = (\tilde{p}^1, \tilde{q}^1, \tilde{p}^2, \tilde{q}^2, \tilde{p}^3, \tilde{q}^3) \\
        \tilde{p}^1(0) = \tilde{q}^2(1) = \tilde{q}^3(1) = 0, \\
        \tilde{p}^1(1) = \tilde{p}^2(0) = \tilde{p}^3(0), \\
        \tilde{q}^1(1) - \tilde{q}^2(0) - \tilde{q}^3(0) = 0.
    \end{matrix}\right\}
\end{equation}
This operator is closed and densely defined and, as is standard for hyperbolic network models of this type, it generates a $C_0$-semigroup of contractions on $Y$: the network dynamics is well posed, and it does not create energy out of nothing.

\subsection{The strategy-to-state maps and game structure}

With the state equation in place, we can describe the game itself. There are three players: Player 1 is the network operator, who sets the entry pressure $u_1$; Players 2 and 3 are suppliers competing in two downstream markets, controlling the exit fluxes $u_2$ and $u_3$ respectively.

This is where the players' individual perspectives enter. Each player $i$ may rely on its own estimate of the friction and inclination coefficients on each pipe, $f_{1,i}^k, f_{2,i}^k$, reflecting for instance different engineering models or private information about pipe conditions. From player $i$'s point of view, the homogenized network dynamics therefore reads
\begin{equation}\label{abs3}
    \begin{cases}
        \mathbf{y}_t(t) = A \mathbf{y}(t) + P_i\mathbf{y}(t) + B_i \mathbf{u}(t), \\
        \mathbf{y}(0) = \mathbf{y}_0^i \coloneqq \mathbf{z}_0^i - \mathbb{B}_1(\vec{\xi})M\mathbf{u}(0),
    \end{cases}
\end{equation}
where $P_i = \text{diag}(P_{1,i}, P_{2,i}, P_{3,i})$, with $P_{k,i} = \begin{bmatrix} 0 & 0 \\ f_{1,i}^k & f_{2,i}^k \end{bmatrix}$, encodes player $i$'s own friction and inclination model, and $\mathcal{B}_i = \mathbb{B}_0 M - P_i \mathbb{B}_1 M - \partial_t \mathbb{B}_1 M$ collects the forcing terms generated by the lifting procedure of the previous subsection. It is precisely this player-dependent operator $P_i$ that keeps the game from collapsing into a single joint optimization problem: no two players are, in general, optimizing against the same physical model of the network.

We work with sufficiently smooth data, so that $\mathbf{y}_0^i \in \mathcal{D}(A)$, and restrict the admissible controls to
$$\mathbb{U} = \left\{\mathbf{u} \in [H^2(0,T)]^3 : M\mathbf{u}(0) = \mathbf{b}_0\right\}, \qquad \mathbf{b}_0 = \begin{bmatrix}
    p_0^1(0) & 0 & 0 & q_0^2 & 0 & q_0^3(0)
\end{bmatrix}^\top,$$
where $\mathbf{b}_0$ simply records the initial boundary values consistent with the initial data. Under these conditions, Theorem \ref{wpp_classic} guarantees that each player's dynamics \eqref{abs3} has a unique classical solution $\mathbf{y}_i \in C([0,T];\mathcal{D}(A)) \cap C^1([0,T];Y)$.

Undoing the change of variables \eqref{newvar} turns the homogenized solution back into physical pressure and flux profiles. This defines player $i$'s strategy-to-state map $\mathcal{Z}_i: \mathbb{U} \to C(\overline{Q})^6$, with $Q = (0,T)\times(0,1)$, as
\begin{equation}
    \mathcal{Z}_i(\mathbf{u}) = \mathcal{S}_i(\mathbf{u}) + \mathbb{B}_1(\vec{\xi})M\mathbf{u},
\end{equation}
where $\mathcal{S}_i$ denotes the solution map of \eqref{abs3}. A standard compact embedding, $\mathcal{D}(A) \xhookrightarrow{c} [H^s(0,1)]^6 \hookrightarrow [C(0,1)]^6$ for $s > 1/2$, combined with the Aubin-Lions lemma, shows that these solution maps have exactly the compactness needed for Assumption \ref{min_constraint}, with $W = [C(\overline{Q})]^6$.

For the gas network to operate safely, pressure and flux must stay within prescribed physical bounds at every point of every pipe and at every time:
\begin{align}\label{cc1}
    0 < p_{\min} \leqslant p^k(t,x) \leqslant p_{\max}, \qquad &\forall (t,x) \in \overline{Q}, \; k=1,2,3, \\ \label{cc2}
    0 \leqslant q^k(t,x) \leqslant q_{\max}, \qquad &\forall (t,x) \in \overline{Q}, \; k=1,2,3.
\end{align}
These pointwise constraints define a closed, convex set $\mathcal{K} \subset W$, and player $i$'s feasible strategies are those consistent with them given the rivals' choices:
\begin{equation}
    \mathbf{U}_i(u_{-i}) = \{u_i \in \mathbb{U}_i^\text{ad} : \mathcal{Z}_i(u) \in \mathcal{K}\}.
\end{equation}
Because $\mathcal{Z}_i$ is affine and $\mathcal{K}$ is convex, Proposition \ref{graph_convex} applies, and the resulting constraint map is graph-convex, the key structural property that makes the existence theory of the previous section available here.

The economic side of the model enters through the market prices faced by suppliers 2 and 3. We use a linear inverse demand function at each exit node,
\begin{equation}
\label{inv_demand}
\pi_i(t)=\alpha_i(t)-\beta_i q^i(t,1)
-\sum_{\substack{j\in\{2,3\}\\ j\neq i}}\sigma_{ij}q^j(t,1) = \alpha_i(t) - \beta_iu_i(t) - \sum_{\substack{j\in\{2,3\}\\ j\neq i}}\sigma_{ij}u_j(t),
\end{equation}
where $\alpha_i(t)>0$ is the market's baseline willingness to pay, $\beta_i>0$ measures how much supplier $i$'s own sales depress its price, and $\sigma_{ij}\ge0$ captures how the rival supplier's sales affect $i$'s price, i.e., the competition between the two downstream markets.

Writing $R_p^k$ and $R_q^k$ for the operators that read off, respectively, the pressure and the flux component on pipe $k$ from the full state vector, the three players' cost functionals are
\begin{align}
    \mathcal{J}_1(\mathbf{u})\! &= \!\!\!\int_0^T\!\!\!\! e^{-\rho t}\! \left[ -\theta [R_q^1 \mathcal{Z}_1(\mathbf{u})](t,0) - \frac{\omega}{2}|u_1(t) - p_{\text{set}}|^2 - \frac{\gamma_1}{2}\big|[R_p^1 \mathcal{Z}_1(\mathbf{u})](t,1) - p_{\text{ref}}\big|^2 \right] dt, \\
    \mathcal{J}_i(\mathbf{u}) &= \int_0^T e^{-\rho t} \left[ -\pi_i(t)u_i(t) + \frac{\gamma_i}{2}\big|[R_p^i \mathcal{Z}_i(\mathbf{u})](t,0) - p_{\text{ref}}\big|^2 \right] dt, \quad i \in \{2,3\}.
\end{align}
Player 1's cost combines a revenue-like term tied to the flux delivered at the inlet, a quadratic penalty for the entry pressure straying from a target $p_\text{set}$, and a penalty for the pressure delivered downstream deviating from a reference value $p_\text{ref}$. Each supplier $i \in \{2,3\}$, meanwhile, minimizes the negative of its own market revenue $\pi_i(t) u_i(t)$ together with a penalty for pressure deviations at its own exit node; all costs are discounted over time at rate $\rho$.

Every hypothesis of Theorem \ref{exist_gen_N} is satisfied by this construction, so the game admits at least one generalized Nash equilibrium $\mathbf{u}^* \in \mathbb{U}$.

\subsection{First-order optimality conditions for Player 1}

We now use the abstract optimality theory of the previous section to turn this existence result into something explicit: first-order conditions describing the equilibrium. We start with Player 1, the network operator. Fix the rivals' equilibrium strategies $u_{-1}^* = (u_2^*, u_3^*)$, and assume, as in Definition \ref{cont_point}, that $\mathbf{u}^*$ is a continuity point for Player 1. Then there exists a Lagrange multiplier, here a vector-valued measure $\mu_1 = (\mu_{p,+}^k, \mu_{p,-}^k, \mu_{q,+}^k, \mu_{q,-}^k)_{k=1}^3 \in [\mathcal{M}(\overline{Q})]^{12}$, associated with the pointwise state constraints \eqref{cc1}--\eqref{cc2}. Complementary slackness means this measure can only be active where the corresponding constraint is active: it "pushes back" only at those times and locations where pressure or flux sits exactly at its upper or lower bound.

Because the map $u_1 \mapsto \mathcal{Z}_1(\mathbf{u})$ is affine, its derivative in a direction $h$ splits cleanly into two pieces, $\mathcal{Z}_1' = \mathcal{Y}_1^0 + L_1$. The first, $\mathcal{Y}_1^0$, is the derivative of the genuine dynamic solution map; the second, $L_1 h = [\mathbb{B}_1^1(\xi_1) [h,0]^\top, \mathbf{0}, \mathbf{0}]^\top$, is the purely algebraic contribution coming from the lifting term in \eqref{newvar}. The resulting stationarity condition reads
\begin{equation}\label{abstract_opt_split}
    0 = \partial_1 \mathcal{J}_1(\mathbf{u}^*) + \lambda_1 + [\mathcal{Y}_1^0]^* \mu_1 + L_1^* \mu_1 \quad \text{in } \mathbb{U}_1^*,
\end{equation}
where $\lambda_1 \in \mathcal{N}_{\mathbb{U}_1^\text{ad}}(u_1^*)$ accounts for the possibility that $u_1^*$ sits on the boundary of its own admissible set $\mathbb{U}_1^\text{ad}$.

To make the term $[\mathcal{Y}_1^0]^*\mu_1$ explicit we introduce an adjoint, or costate, system: a standard device that turns a condition on the derivative of the state into a more tractable condition on a companion variable. Define the adjoint vector $\boldsymbol{\Phi}_1 = [(\varphi^1_1, \psi^1_1), (\varphi^2_1, \psi^2_1), (\varphi^3_1, \psi^3_1)]^\top$ as the unique transposition solution of the backward-in-time hyperbolic system
\begin{equation}\label{adjoint_PDE}
    \begin{cases}
        -(\varphi_1^k)_t - (\psi_1^k)_x - f_{1,1}^k \psi_1^k = \mu_{1,p,+}^k - \mu_{1,p,-}^k, & \text{in } (0,T)\times(0,1), \\[1mm]
        -(\psi_1^k)_t - c^2 (\varphi_1^k)_x - f_{2,1}^k \psi_1^k = \mu_{1,q,+}^k - \mu_{1,q,-}^k, & \text{in } (0,T)\times(0,1), \\[1mm]
        \varphi_1^k(T,x) = 0, \quad \psi_1^k(T,x) = 0, & \text{in } (0,1),
    \end{cases}
\end{equation}
which runs backward from a zero terminal condition at $t=T$, driven by $\mu_1$ wherever a state constraint is active. At the internal junction $v_2$, the adjoint variables inherit their own transmission conditions, mirroring \eqref{pressure_cont}--\eqref{flow_cont}:
\begin{equation}
    \psi_1^1(t,1) = \psi_1^2(t,0) = \psi_1^3(t,0), \qquad \varphi_1^1(t,1) - \varphi_1^2(t,0) - \varphi_1^3(t,0) = 0.
\end{equation}
At the exterior boundary nodes, the pressure-tracking term in Player 1's cost functional feeds directly into the adjoint boundary data, forcing
\begin{equation}
    \psi_1^2(t,1) = 0, \quad \psi_1^3(t,1) = 0, \quad \text{and} \quad c^2\varphi_1^1(t,1) = \gamma_1 e^{-\rho t} \left(p^1(t,1) - p_{\text{ref}}\right).
\end{equation}
Testing the adjoint system against a variation $h \in \mathbb{U}_1$ and integrating by parts, the dynamic part of the dual pairing collapses to a boundary trace:
\begin{equation}
    \langle \mu_1, \mathcal{Y}_1^0 h \rangle = \int_0^T \left( -\psi_1^1(t,0) - \theta e^{-\rho t} \right) h(t) \, dt.
\end{equation}
The algebraic part coming from the lifting operator contributes, in turn,
\begin{equation}
    \langle \mu_1, L_1 h \rangle = \int_0^T \left( \int_0^1 (1-\xi_1) \, d(\mu_{1,p,+}^1 - \mu_{1,p,-}^1)(t, \xi_1) \right) h(t) \, dt.
\end{equation}
Putting these two pieces back into \eqref{abstract_opt_split} gives an explicit, checkable characterization of Player 1's equilibrium strategy: for every admissible test control $v_1 \in \mathbb{U}_1^\text{ad}$, the optimal entry pressure $u_1^*$ satisfies the variational inequality
\begin{equation}\label{final_vi_p1}
\begin{aligned}
    &\int_0^T \left[ -\psi_1^1(t,0) - \theta e^{-\rho t} - \omega e^{-\rho t}(u_1^*(t) - p_{\text{set}})\right] (v_1(t) - u_1^*(t)) \, dt  \\ & \hspace{2cm} +\int_0^T \left[\int_0^1 (1-\xi_1) d\nu_{1,p}^1(t,\xi_1) \right] (v_1(t) - u_1^*(t)) \, dt \geqslant 0,
\end{aligned}
\end{equation}
where $\nu_{1,p}^1 = \mu_{1,p,+}^1 - \mu_{1,p,-}^1 \in \mathcal{M}(\overline{Q})$ is the net multiplier associated with the pressure constraint on pipe 1. In words, \eqref{final_vi_p1} says that at equilibrium no admissible perturbation of $u_1^*$ can lower Player 1's cost once its own marginal cost, the network's feedback carried by the adjoint variable $\psi_1^1(t,0)$, and the shadow price of any active pressure constraint are all accounted for.

\subsection{First-order optimality conditions for Players 2 and 3}

The same procedure applies, with minor changes, to the two suppliers. Throughout, $i \in \{2,3\}$ denotes the supplier under consideration and $j \in \{2,3\}$, $j \neq i$, its rival.

Assuming again that $\mathbf{u}^*$ is a continuity point for player $i$, there is an associated multiplier measure $\mu_i = (\mu_{i,p,+}^k, \mu_{i,p,-}^k, \mu_{i,q,+}^k, \mu_{i,q,-}^k)_{k=1}^3 \in [\mathcal{M}(\overline{Q})]^{12}$ enforcing complementary slackness for player $i$'s state constraints. The only structural difference from Player 1 is which pipe carries the control: edge $2$ for Player 2, edge $3$ for Player 3.

As before, the derivative of the strategy-to-state map splits as $\mathcal{Z}_i' = \mathcal{Y}_i^0 + L_i$, with the algebraic piece now landing on the corresponding exit pipe: $$L_2 h = [\mathbf{0}, \mathbb{B}_1^2(\xi_2) [0,h]^\top, \mathbf{0}]^\top$$ for Player 2, and $L_3 h = [\mathbf{0}, \mathbf{0}, \mathbb{B}_1^3(\xi_3) [0,h]^\top]^\top$ for Player 3.

Supplier $i$'s first-order stationarity condition is
\begin{equation}\label{abstract_opt_split_suppliers}
    0 = \partial_i \mathcal{J}_i(\mathbf{u}^*) + \lambda_i + [\mathcal{Y}_i^0]^* \mu_i + L_i^* \mu_i \quad \text{in } \mathbb{U}_i^*,
\end{equation}
with $\lambda_i \in \mathcal{N}_{\mathbb{U}_i^\text{ad}}(u_i^*)$ again accounting for possible active constraints in $u_i$'s own admissible set.

To make $[\mathcal{Y}_i^0]^* \mu_i$ explicit we introduce, exactly as for Player 1, a player-specific adjoint system $\boldsymbol{\Phi}_i = [(\varphi^1_i, \psi^1_i), (\varphi^2_i, \psi^2_i), (\varphi^3_i, \psi^3_i)]^\top$, now built with player $i$'s own friction/inclination model $P_i$:
\begin{equation}\label{adjoint_PDE_suppliers}
    \begin{cases}
        -(\varphi_i^k)_t - (\psi_i^k)_x - f_{1,i}^k \psi_i^k = \mu_{i,p,+}^k - \mu_{i,p,-}^k, & \text{in } (0,T)\times(0,1), \\[1mm]
        -(\psi_i^k)_t - c^2 (\varphi_i^k)_x - f_{2,i}^k \psi_i^k = \mu_{i,q,+}^k - \mu_{i,q,-}^k, & \text{in } (0,T)\times(0,1), \\[1mm]
        \varphi_i^k(T,x) = 0, \quad \psi_i^k(T,x) = 0, & \text{in } (0,1).
    \end{cases}
\end{equation}
At the junction $v_2$, the adjoint variables again satisfy the same continuity structure as before:
\begin{equation}
    \psi_i^1(t,1) = \psi_i^2(t,0) = \psi_i^3(t,0), \qquad \varphi_i^1(t,1) - \varphi_i^2(t,0) - \varphi_i^3(t,0) = 0.
\end{equation}
One genuine difference from Player 1 appears here: supplier $i$'s cost penalizes the pressure at $v_2$, which sits at the coordinate $x=0$ of both exit pipes 2 and 3. As a result, the tracking term in the objective enters the adjoint equations not at a boundary but at this shared internal point:
\begin{equation}
    c^2 \varphi_i^2(t,0) - c^2 \varphi_i^3(t,0) = \gamma_i e^{-\rho t}\left( p^i(t,0) - p_{\text{ref}} \right).
\end{equation}
The remaining boundary conditions for $\boldsymbol{\Phi}_i$ are homogeneous, since nothing in player $i$'s cost acts there: $\psi_i^1(t,0) = 0$ at the network inlet, and $\psi_i^j(t,1) = 0$ at the rival supplier's exit.

Testing against a variation $h \in \mathbb{U}_i$, the dynamic part of the dual pairing reduces to the adjoint flux trace at player $i$'s own exit:
\begin{equation}
    \langle \mu_i, \mathcal{Y}_i^0 h \rangle = \int_0^T c^2 \varphi_i^i(t,1) h(t) \, dt.
\end{equation}
The algebraic lifting contribution, meanwhile, is spread out along pipe $i$ according to
\begin{equation}
    \langle \mu_i, L_i h \rangle = \int_0^T \left( \int_0^1 \xi_i \, d(\mu_{i,q,+}^i - \mu_{i,q,-}^i)(t, \xi_i) \right) h(t) \, dt.
\end{equation}
Finally, differentiating the cost functional itself brings in the market interaction directly, through the inverse demand relation \eqref{inv_demand}:
\begin{equation}
    \partial_i \mathcal{J}_i(\mathbf{u}^*)h = \int_0^T e^{-\rho t} \left[ -\alpha_i(t) + 2\beta_i u_i^*(t) + \sigma_{ij} u_j^*(t) \right] h(t) \, dt.
\end{equation}

Assembling all these pieces in \eqref{abstract_opt_split_suppliers} gives the explicit equilibrium characterization for supplier $i \in \{2,3\}$: for every admissible test strategy $v_i \in \mathbb{U}_i^\text{ad}$,
\begin{equation}\label{final_vi_suppliers}
\begin{aligned}
    &\int_0^T \left[ c^2 \varphi_i^i(t,1) + e^{-\rho t} \left( -\alpha_i(t) + 2\beta_i u_i^*(t) + \sigma_{ij} u_j^*(t) \right)\right] (v_i(t) - u_i^*(t)) \, dt \\ & \hspace{4cm} + \int_0^T \left[ \int_0^1 \xi_i d\nu_{i,q}^i(t,\xi_i) \right] (v_i(t) - u_i^*(t)) \, dt \geqslant 0,
    \end{aligned}
\end{equation}
where $\nu_{i,q}^i = \mu_{i,q,+}^i - \mu_{i,q,-}^i \in \mathcal{M}(\overline{Q})$ is the net multiplier for the flux constraint on pipe $i$. As for Player 1, this says that at equilibrium supplier $i$ balances the network's feedback, carried by the adjoint variable $\varphi_i^i(t,1)$, against its own marginal market revenue and the shadow price of any active flux constraint along its pipe.

\bibliographystyle{siamplain}
\bibliography{references_p2}

@article {MR1079985,
    AUTHOR = {Ambrosio, Luigi},
     TITLE = {Metric space valued functions of bounded variation},
   JOURNAL = {Ann. Scuola Norm. Sup. Pisa Cl. Sci. (4)},
  FJOURNAL = {Annali della Scuola Normale Superiore di Pisa. Classe di
              Scienze. Serie IV},
    VOLUME = {17},
      YEAR = {1990},
    NUMBER = {3},
     PAGES = {439--478},
      ISSN = {0391-173X,2036-2145},
   MRCLASS = {26A45 (46E40)},
  MRNUMBER = {1079985},
MRREVIEWER = {A.\ Precupanu},
       URL = {http://www.numdam.org/item?id=ASNSP_1990_4_17_3_439_0},
}

@book {MR2500068,
    AUTHOR = {Amann, Herbert and Escher, Joachim},
     TITLE = {Analysis. {III}},
      NOTE = {Translated from the 2001 German original by Silvio Levy and
              Matthew Cargo},
 PUBLISHER = {Birkh\"auser Verlag, Basel},
      YEAR = {2009},
     PAGES = {xii+468},
      ISBN = {978-3-7643-7479-2; 3-7643-7479-2},
   MRCLASS = {00-01 (26-01 28-01 58-01)},
  MRNUMBER = {2500068},
       DOI = {10.1007/978-3-7643-7480-8},
       URL = {https://doi.org/10.1007/978-3-7643-7480-8},
}

@article{arrow_existence_1954,
	title = {Existence of an {Equilibrium} for a {Competitive} {Economy}},
	volume = {22},
	issn = {0012-9682},
	url = {https://www.jstor.org/stable/1907353},
	doi = {10.2307/1907353},
	number = {3},
	urldate = {2024-05-21},
	journal = {Econometrica : journal of the Econometric Society},
	author = {Arrow, Kenneth J. and Debreu, Gerard},
	year = {1954},
	pages = {265--290},
}

@book{aubin_set-valued_2009,
	address = {Boston},
	title = {Set-{Valued} {Analysis}},
	copyright = {https://www.springernature.com/gp/researchers/text-and-data-mining},
	isbn = {978-0-8176-4847-3 978-0-8176-4848-0},
	url = {https://link.springer.com/10.1007/978-0-8176-4848-0},
	language = {en},
	urldate = {2025-12-08},
	publisher = {Birkhäuser},
	author = {Aubin, Jean-Pierre and Frankowska, Hélène},
	year = {2009},
	doi = {10.1007/978-0-8176-4848-0},
}

@book{bensoussan_impulse_1987,
	series = {Modern {Applied} {Mathematics} {Series}},
	title = {Impulse {Control} and {Quasi} {Variational} {Inequalities}},
	isbn = {978-0-471-82973-7},
	publisher = {John Wiley \& Sons Canada, Limited},
	author = {Bensoussan, Alain and Lions, Jacques-Louis},
	year = {1987},
}

@misc{bongarti2025structureversusregularitysetvalued,
      title={Structure versus regularity of set-valued maps in convex generalized {N}ash equilibrium problems in {B}anach spaces},
      author={Marcelo Bongarti and Michael Hintermüller},
      year={2025},
      eprint={2512.12831},
      archivePrefix={arXiv},
      primaryClass={math.OC},
      note = {\url{https://arxiv.org/abs/2512.12831}, \doi{10.48550/arXiv.2512.12831}}
}

@article{bongarti_optimal_2024,
	title = {Optimal {Boundary} {Control} of the {Isothermal} {Semilinear} {Euler} {Equation} for {Gas} {Dynamics} on a {Network}},
	volume = {89},
	copyright = {All rights reserved},
	issn = {0095-4616, 1432-0606},
	url = {https://link.springer.com/10.1007/s00245-023-10088-0},
	doi = {10.1007/s00245-023-10088-0},
	language = {en},
	number = {2},
	urldate = {2024-05-22},
	journal = {Applied Mathematics \& Optimization},
	author = {Bongarti, Marcelo and Hintermüller, Michael},
	month = apr,
	year = {2024},
	pages = {36},
}

@inproceedings{bongarti_singular_2020,
	address = {Cham},
	title = {Singular {Thermal} {Relaxation} {Limit} for the {Moore}-{Gibson}-{Thompson} {Equation} {Arising} in {Propagation} of {Acoustic} {Waves}},
	copyright = {All rights reserved},
	isbn = {978-3-030-46079-2},
	doi = {10.1007/978-3-030-46079-2_9},
	language = {en},
	booktitle = {Semigroups of {Operators} – {Theory} and {Applications}},
	publisher = {Springer International Publishing},
	author = {Bongarti, Marcelo and Charoenphon, Sutthirut and Lasiecka, Irena},
	editor = {Banasiak, Jacek and Bobrowski, Adam and Lachowicz, Mirosław and Tomilov, Yuri},
	year = {2020},
	pages = {147--182},
}

@article{chan_generalized_1982,
	title = {The {Generalized} {Quasi}-{Variational} {Inequality} {Problem}},
	volume = {7},
	issn = {0364-765X, 1526-5471},
	url = {https://pubsonline.informs.org/doi/10.1287/moor.7.2.211},
	doi = {10.1287/moor.7.2.211},
	number = {2},
	urldate = {2024-07-19},
	journal = {Mathematics of Operations Research},
	author = {Chan, Der-San and Pang, Jong-Shi},
	month = may,
	year = {1982},
	pages = {211--222},
}

@incollection{debreu_chapter_1982,
	title = {Chapter 15 {Existence} of competitive equilibrium},
	volume = {2},
	copyright = {https://www.elsevier.com/tdm/userlicense/1.0/},
	isbn = {978-0-444-86127-6},
	url = {https://linkinghub.elsevier.com/retrieve/pii/S1573438282020104},
	language = {en},
	urldate = {2024-07-18},
	booktitle = {Handbook of {Mathematical} {Economics}},
	publisher = {Elsevier},
	author = {Debreu, Gerard},
	year = {1982},
	doi = {10.1016/S1573-4382(82)02010-4},
	pages = {697--743},
}

@misc{dvurechensky2024cournotnashmodelcoupledhydrogen,
	title = {A cournot-nash model for a coupled hydrogen and electricity market},
	url = {https://arxiv.org/abs/2410.20534},
	author = {Dvurechensky, Pavel and Geiersbach, Caroline and Hintermüller, Michael and Kannan, Aswin and Kater, Stefan and Zöttl, Gregor},
	year = {2024},
	note = {arXiv: 2410.20534 [math.OC]},
}

@misc{dvurechensky_cournot-nash_2024,
	title = {A {Cournot}-{Nash} {Model} for a {Coupled} {Hydrogen} and {Electricity} {Market}},
	url = {http://arxiv.org/abs/2410.20534},
	doi = {10.48550/arXiv.2410.20534},
	urldate = {2025-12-05},
	publisher = {arXiv},
	author = {Dvurechensky, Pavel and Geiersbach, Caroline and Hintermüller, Michael and Kannan, Aswin and Kater, Stefan and Zöttl, Gregor},
	month = oct,
	year = {2024},
	note = {arXiv:2410.20534 [math]},
}

@article{hintermueller_pde-constrained_2013,
	title = {A {PDE}-{Constrained} {Generalized} {Nash} {Equilibrium} {Problem} with {Pointwise} {Control} and {State} {Constraints}},
	volume = {9},
	issn = {1348-9151},
	number = {2},
	journal = {Pacific Journal of Optimization},
	author = {Hinterm\"uller, Michael and Surowiec, Thomas},
	month = apr,
	year = {2013},
	pages = {251--273},
}

@article{hintermuller_generalized_2015,
	title = {Generalized {Nash} {Equilibrium} {Problems} in {Banach} {Spaces}: {Theory}, {Nikaido}–{Isoda}-{Based} {Path}-{Following} {Methods}, and {Applications}},
	volume = {25},
	issn = {1052-6234, 1095-7189},
	url = {http://epubs.siam.org/doi/10.1137/130929203},
	doi = {10.1137/130929203},
	number = {3},
	urldate = {2025-07-03},
	journal = {SIAM Journal on Optimization},
	author = {Hintermüller, Michael and Surowiec, Thomas and Kämmler, Alexander},
	month = jan,
	year = {2015},
	pages = {1826--1856},
}

@book{ioffe_theory_2010,
	address = {Amsterdam New York},
	series = {Studies in mathematics and its applications},
	title = {Theory of extremal problems},
	isbn = {978-0-444-85167-3 978-0-08-087527-9},
	language = {eng},
	number = {v. 6},
	publisher = {North-Holland Pub. Co},
	author = {Ioffe, Aleksandr D. and Tichomirov, Vladimir M.},
	note = {Translated from the Russian by Karol Makowski},
	year = {2010},
}

@book{lasiecka_control_2000,
	address = {Cambridge},
	series = {Encyclopedia of {Mathematics} and its {Applications}},
	title = {Control {Theory} for {Partial} {Differential} {Equations}: {Continuous} and {Approximation} {Theories}: {Volume} 1: {Abstract} {Parabolic} {Systems}},
	volume = {1},
	isbn = {978-0-521-43408-9},
	shorttitle = {Control {Theory} for {Partial} {Differential} {Equations}},
	url = {https://www.cambridge.org/core/books/control-theory-for-partial-differential-equations/3D51EEA85884F240F6FC5EEA024B8460},
	urldate = {2025-12-05},
	publisher = {Cambridge University Press},
	author = {Lasiecka, Irena and Triggiani, Roberto},
	year = {2000},
	doi = {10.1017/CBO9781107340848},
}

@article{lions_hierarchic_1994,
	title = {Hierarchic control},
	volume = {104},
	copyright = {http://www.springer.com/tdm},
	issn = {0253-4142, 0973-7685},
	url = {http://link.springer.com/10.1007/BF02830893},
	doi = {10.1007/BF02830893},
	language = {en},
	number = {1},
	urldate = {2024-07-22},
	journal = {Proceedings Mathematical Sciences},
	author = {Lions, J. L.},
	month = feb,
	year = {1994},
	pages = {295--304},
}

@article{lions_remarks_1994,
	title = {Some remarks on {Stackelberg's} optimization},
    author = {Lions, J.L.},
	volume = {04},
	issn = {0218-2025, 1793-6314},
	url ={https://www.worldscientific.com/doi/10.1142/S0218202594000273},
	doi = {10.1142/S0218202594000273},
	language = {en},
	number = {04},
	urldate = {2024-07-22},
	journal = {Mathematical Models and Methods in Applied Sciences},
	month = aug,
	year = {1994},
	pages = {477--487},
}

@article{machowska_closed-loop_2022,
	title = {Closed-loop {N}ash equilibrium for a partial differential game with application to competitive personalized advertising},
	volume = {140},
	issn = {0005-1098},
	url = {https://www.sciencedirect.com/science/article/pii/S0005109822000656},
	doi = {10.1016/j.automatica.2022.110220},
	urldate = {2025-12-05},
	journal = {Automatica},
	author = {Machowska, Dominika and Nowakowski, Andrzej and Wiszniewska-Matyszkiel, Agnieszka},
	month = jun,
	year = {2022},
	pages = {110220},
}

@article{marchand_abstract_2012,
	title = {An abstract semigroup approach to the third-order {Moore}–{Gibson}–{Thompson} partial differential equation arising in high-intensity ultrasound: structural decomposition, spectral analysis, exponential stability},
	volume = {35},
	copyright = {Copyright © 2012 John Wiley \& Sons, Ltd.},
	issn = {1099-1476},
	shorttitle = {An abstract semigroup approach to the third-order {Moore}–{Gibson}–{Thompson} partial differential equation arising in high-intensity ultrasound},
	url = {https://onlinelibrary.wiley.com/doi/abs/10.1002/mma.1576},
	doi = {10.1002/mma.1576},
	language = {en},
	number = {15},
	urldate = {2025-12-16},
	journal = {Mathematical Methods in the Applied Sciences},
	author = {Marchand, R. and McDevitt, T. and Triggiani, R.},
	year = {2012},
	note = {\_eprint: https://onlinelibrary.wiley.com/doi/pdf/10.1002/mma.1576},
	pages = {1896--1929},
}

@article{nikaido_note_1955,
	title = {Note on non-cooperative convex game},
	volume = {5},
	issn = {0030-8730},
	url = {https://projecteuclid.org/journals/pacific-journal-of-mathematics/volume-5/issue-5/Note-on-non-cooperative-convex-game/pjm/1103045138.full},
	doi = {10.2140/pjm.1955.5.807},
	number = {5},
	urldate = {2024-07-19},
	journal = {Pacific Journal of Mathematics},
	author = {Nikaidô, Hukukane and Isoda, Kazuo},
	month = dec,
	year = {1955},
	pages = {807--815},
}

@article{pang_quasi-variational_2005,
	title = {Quasi-variational inequalities, generalized {Nash} equilibria, and multi-leader-follower games},
	volume = {2},
	issn = {1617-5891, 1617-5905},
	url = {https://doi.org/10.1007/s10287-004-0020-y},
	doi = {10.1007/s10287-004-0020-y},
	number = {1},
	urldate = {2024-05-21},
	journal = {Computational Management Science},
	author = {Pang, Jong-Shi and Fukushima, Masao},
	month = jan,
	year = {2005},
	pages = {21--56},
}

@book{pazy_semigroups_1983,
	address = {New York, NY},
	series = {Applied {Mathematical} {Sciences}},
	title = {Semigroups of {Linear} {Operators} and {Applications} to {Partial} {Differential} {Equations}},
	volume = {44},
	copyright = {http://www.springer.com/tdm},
	isbn = {978-1-4612-5563-5 978-1-4612-5561-1},
	url = {http://link.springer.com/10.1007/978-1-4612-5561-1},
	urldate = {2025-12-07},
	publisher = {Springer},
	author = {Pazy, A.},
	year = {1983},
	doi = {10.1007/978-1-4612-5561-1},
}

@article{roubicek_nash_2007,
	title = {On {Nash} {Equilibria} for {Noncooperative} {Games} {Governed} by the {Burgers} {Equation}},
	volume = {132},
	issn = {0022-3239, 1573-2878},
	url = {https://doi.org/10.1007/s10957-007-9238-y},
	doi = {10.1007/s10957-007-9238-y},
	number = {1},
	urldate = {2024-07-19},
	journal = {Journal of Optimization Theory and Applications},
	author = {Roubíček, Tomáš},
	month = mar,
	year = {2007},
	pages = {41--50},
}

@incollection{roubicek_noncooperative_1999,
	address = {Basel},
	series = {International {Series} of {Numerical} {Mathematics}},
	title = {Noncooperative {Games} with {Elliptic} {Systems}},
	isbn = {978-3-0348-8683-3 978-3-0348-8690-1},
	url = {http://link.springer.com/10.1007/978-3-0348-8690-1_14},
	language = {en},
	urldate = {2024-05-21},
	booktitle = {Optimal {Control} of {Partial} {Differential} {Equations}},
	publisher = {Birkhäuser},
	author = {Roubíček, Tomáš},
	editor = {Hoffmann, Karl-Heinz and Leugering, Günter and Tröltzsch, Fredi and Caesar, Siegfried},
	year = {1999},
	doi = {10.1007/978-3-0348-8690-1_14},
	pages = {245--255},
}

@article{simon_compact_1986,
	title = {Compact sets in the {spaceLp}({O},{T}; {B})},
	volume = {146},
	issn = {1618-1891},
	url = {https://doi.org/10.1007/BF01762360},
	doi = {10.1007/BF01762360},
	language = {en},
	number = {1},
	urldate = {2025-12-07},
	journal = {Annali di Matematica Pura ed Applicata},
	author = {Simon, Jacques},
	month = dec,
	year = {1986},
	pages = {65--96},
}

@article{zame_competitive_1987,
	title = {Competitive {Equilibria} in {Production} {Economies} with an {Infinite}-{Dimensional} {Commodity} {Space}},
	volume = {55},
	issn = {0012-9682},
	url = {https://www.jstor.org/stable/1911262},
	doi = {10.2307/1911262},
	number = {5},
	urldate = {2024-05-22},
	journal = {Econometrica},
	author = {Zame, William R.},
	year = {1987},
	note = {Publisher: [Wiley, Econometric Society]},
	pages = {1075--1108},
}
\end{document}